\numberwithin{equation}{section}
\newtheorem{theorem}{Theorem}[section]
\newtheorem*{theorem*}{Theorem}
\newtheorem{proposition}[theorem]{Proposition}
\newtheorem{lemma}[theorem]{Lemma}
\newtheorem{corollary}[theorem]{Corollary}
\theoremstyle{definition}
\newtheorem{definition}[theorem]{Definition}
\theoremstyle{remark}
\newtheorem{remark}[theorem]{Remark}
\newtheorem*{Ack}{Acknowledgments}
\def\wt#1{\widetilde{#1}}
\def\ol#1{\overline{#1}}
\def\pind{\ind_{\partial}}
\def\do#1{\csdef{c#1}{\mathcal{#1}}}
\def\do#1{\csdef{#1}{\mathbb{#1}}}
\def\do#1{\csdef{#1#1}{\mathbf{#1}}}
\def\do#1{\csdef{#1}{\operatorname{#1}}}
\def\do#1{\csdef{m#1}{\mathfrak{#1}}}
\def\do#1{\csdef{#1}{\operatorname{#1}}}
\newcounter{nparcount}
\def\@setref#1#2#3{%
  \ifx#1\relax
   \protect\G@refundefinedtrue
   \nfss@text{\colorbox{green!30}{[REF]}}%
   \@latex@warning{Reference '#3' on page \thepage \space
             undefined}%
  \else
   \expandafter#2#1\null
  \fi}
\def\@citex[#1]#2{\leavevmode
  \let\@citea\@empty
  \@cite{\@for\@citeb:=#2\do
    {\@citea\def\@citea{,\penalty\@m\ }%
     \edef\@citeb{\expandafter\@firstofone\@citeb\@empty}%
     \if@filesw\immediate\write\@auxout{\string\citation{\@citeb}}\fi
     \@ifundefined{b@\@citeb}{\colorbox{blue!30}{\reset@font REF}%
       \G@refundefinedtrue
       \@latex@warning
         {Citation `\@citeb' on page \thepage \space undefined}}%
       {\@cite@ofmt{\csname b@\@citeb\endcsname}}}}{#1}}
\title{Families of Morse functions for manifolds with boundary}
\author{Maciej Borodzik}
\address{Institute of Mathematics, University of Warsaw, ul. Banacha 2,
02-097 Warsaw, Poland}
\email{mcboro@mimuw.edu.pl}
\author{Weronika Buczyńska}
\address{Institute of Mathematics, University of Warsaw, ul. Banacha 2,
02-097 Warsaw, Poland}
\email{wkrych@mimuw.edu.pl}
\subjclass[2010]{primary: 57Q60. } 
\keywords{Morse theory, Cerf theory, handle slides}
\begin{document}

\begin{abstract}
  We study 1-parameter families of Morse functions for manifolds with boundary. We list all degeneracies that may occur
  in generic 1-parameter families.
\end{abstract}

\maketitle
\section{Introduction}
Morse theory for manifolds with boundary was developed by Hajduk \cite{Haj} and Jankowski--Rubinsztein \cite{JR}
in the 1970s, and rediscovered by 
Kronheimer and Mrowka 
\cite{KM}. Despite formal similarities to the classical case of closed manifolds, the theory is more complex.
The analog of Morse--Smale--Witten chain complex for manifolds with boundary involves non-trivial terms coming from
critical points on the boundary; see \cite{Haj,KM}.
In \cite{BNR} it was proved  that a critical point in the interior can be pushed into the boundary, where it splits into two boundary critical points. This phenomenon, which we will refer to as \emph{boundary splitting} was observed first by \cite{Haj}. In \cite{BMi}, an opposite phenomenon was studied: given two boundary critical points such that there is a single trajectory of a gradient-like vector field connecting them, one can collide these critical points and push them into the interior. We refer to \cite{BMi} for a precise statement.

Families of Morse functions on closed manifolds were extensively studied by Cerf \cite{Cerf}. However, even one-parameter families of Morse functions on manifolds with boundary have not been described in the literature. A natural question arises: Is it true that a typical one-parameter family of functions consists of Morse functions and finitely many occurrences of births/deaths and boundary splittings?

In this paper, we answer this question negatively. We study 1-parameter families of smooth functions on manifolds with boundary and investigate non-Morse singularities that appear generically (there is a subtlety in what we mean by a generic $1$-parameter family, see Remark~\ref{rem:subtle}). Among them, as expected, there are birth/deaths of pairs of critical points in the interior, births/deaths of pairs of boundary critical points. However, there is an extra phenomenon: an interior critical point collides with a boundary critical point, and then the critical point on the boundary changes its stability. Surprisingly, this phenomenon -- unlike boundary splitting --  is a codimension~1 behavior. We call it a \emph{collision} and describe in detail in Subsection~\ref{sub:beefurcate}.

Our main theorem describes the full list of  bifurcations that can occur in generic one-parameter families. It is stated as follows:
\begin{theorem}\label{thm:main}
  Let $N$ be a smooth compact manifold with boundary.
	Suppose $F_0,F_1\colon N\to\R$ be two Morse functions.
	Assume $F_\sigma$, $\sigma\in[0,1]$ is a path connecting these functions. Then, up to perturbing $F_\sigma$ rel $F_0,F_1$, we can assume that
	\begin{itemize}
		\item there exists a finitely many points $\sigma_1,\dots,\sigma_m$ such that $F_\sigma$ is Morse if $\sigma$ is different from $\sigma_1,\dots,\sigma_m$;
		\item at each of $\sigma_i$, there is precisely one non-Morse critical pint. The change between $F_{\sigma_i-\varepsilon}$ and $F_{\sigma_i+\varepsilon}$ (with $\varepsilon>0$ sufficiently small) is one of the following:
			\begin{itemize}
				\item birth/death of critical points in the interior;
				\item birth/death of critical points on the boundary;
				\item a collision of a boundary and interior critical point.
			\end{itemize}
	\end{itemize}
\end{theorem}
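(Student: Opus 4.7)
The plan is to mimic Cerf's treatment of 1-parameter families on closed manifolds via jet transversality, but carried out for the extended stratification appropriate to manifolds with boundary. We view a path as a smooth map $F\colon N\times[0,1]\to\R$, $F(x,\sigma)=F_\sigma(x)$, and stratify the space $C^\infty(N,\R)$ according to the failure of the Morse condition. Recall that $F_\sigma$ is Morse in our sense when (i) all interior critical points are non-degenerate, (ii) all critical points of $F_\sigma|_{\partial N}$ are non-degenerate, and (iii) at every such boundary critical point the normal derivative $\partial_\nu F_\sigma$ is non-zero. The strategy is to stratify the non-Morse locus by codimension, show that only three strata appear in codimension one, and appeal to Thom transversality to conclude that a generic path meets only these strata, each at finitely many times and transversally.

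The first step is to identify the codimension-one strata. For interior critical points, the Thom--Boardman stratification of the 3-jet bundle yields the $A_2$ cusp as the only codimension-one degeneracy, which is responsible for interior birth/death. For boundary critical points, one applies the same analysis to $F_\sigma|_{\partial N}$ while keeping condition~(iii) open, giving a codimension-one stratum of $A_2$ singularities of the restriction with $\partial_\nu F_\sigma\ne 0$; this is boundary birth/death. Finally, the locus where $\partial_\nu F_\sigma$ vanishes at a critical point of $F_\sigma|_{\partial N}$, with all other conditions remaining generic, cuts out one additional scalar condition and hence a codimension-one stratum. At such a point $F_\sigma$ has a genuine critical point sitting on $\partial N$; this is the collision stratum.

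Next we must rule out every other codimension-one phenomenon. The central dimension count is organized around the map $(p,\sigma)\mapsto\bigl(\nabla_\partial F_\sigma(p),\partial_\nu F_\sigma(p)\bigr)$ on $\partial N\times[0,1]$: transversality to zero gives isolated collision times. The boundary splitting phenomenon of \cite{BNR,Haj}, in which an interior critical point migrates onto $\partial N$ and splits into two boundary critical points, requires simultaneously $\nabla F_{\sigma_0}(p)=0$ at a boundary point together with an additional degeneracy of the boundary Hessian that produces two branches rather than one; this is one more scalar condition beyond the collision, hence codimension~2. Similar counts dispose of two independent $A_2$ degeneracies, simultaneous interior and boundary degeneracies, $A_3$ swallowtails, and so on. Since each such stratum has codimension $\geq 2$, a generic path avoids them altogether, while the three codimension-one strata are crossed transversely at isolated values of $\sigma$.

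The final step is to produce local normal forms at each crossing. For the interior $A_2$ and boundary $A_2$ cases, the Morse--Cerf parametrized splitting lemma (applied in the interior, respectively on the boundary and then extended using condition~(iii)) delivers the standard $\pm t^3+\sigma t+Q$ birth/death model. For the collision, choosing coordinates $(x_1,\ldots,x_{n-1},t)$ with $\partial N=\{t=0\}$ centered at the colliding point, the transversality of the family to the collision stratum lets us bring $F_\sigma$ to the form $Q(x_1,\ldots,x_{n-1})+\tfrac{1}{3}t^3-\sigma t$ with $Q$ a non-degenerate quadratic form; inspection then confirms the exchange of an interior critical point with a boundary critical point whose normal-derivative sign flips, matching the description in Subsection~\ref{sub:beefurcate}. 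The main obstacle is this parametric normal form for the collision, since its defining conditions mix tangential and normal jets and so require a slightly non-standard application of the relative splitting lemma; once that step is in place, the remainder is routine bookkeeping of codimensions in the Thom--Mather framework.
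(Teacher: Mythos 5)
Your proposal runs non-equivariant Thom transversality in $C^\infty(N;\R)$ (equivalently on $N\times[0,1]$), and this cannot work for the notion of Morse function the theorem is about. Your condition~(iii) --- $\partial_\nu F_\sigma\neq 0$ at critical points of $F_\sigma|_{\partial N}$ --- is Hajduk's $m$-function convention, whereas Definition~\ref{def:bmorse} requires the opposite: a critical point of $F|_{\partial N}$ must be a genuine critical point of $F$, so the normal derivative \emph{vanishes} there. By Proposition~\ref{prop:bummer} this is a closed, nowhere dense condition, so boundary Morse functions are neither open nor dense in $C^\infty(N;\R)$. Concretely, your ``collision stratum'' ($\partial_\nu F=0$ at a critical point of the restriction) is exactly the locus containing every boundary Morse function that has a boundary critical point --- in particular the endpoints $F_0,F_1$ themselves. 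You cannot perturb a path rel endpoints so that it meets transversally, at isolated times, a stratum in which the endpoints already sit; and a path that is generic in your sense consists of functions that are not Morse in the sense of the theorem, so the events ``birth/death of boundary critical points'' and ``collision'' (both involving genuine boundary critical points that persist over intervals of $\sigma$) cannot be read off from it. The missing idea is the passage to the double $M=\cD(N)$ and the restriction to $\Z_2$-equivariant functions: there the vanishing of the normal derivative along the fixed locus is automatic, equivariant Morse functions are open-dense, and Wall's equivariant jet transversality (Theorem~\ref{thm:equiv}) supplies the stratification and the parameter count, after which Proposition~\ref{prop:dbldbl} transports the endpoints into the doublable class.

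The error propagates to your normal form for the collision. You propose $Q(x_1,\dots,x_{n-1})+\tfrac13 t^3-\sigma t$ in the normal variable $t$; this model has no critical point on $\partial N$ for $\sigma\neq 0$ and describes an interior critical point escaping through the boundary and disappearing. The actual collision (Proposition~\ref{prop:normalF122} and Subsection~\ref{sub:beefurcate}) is $\sum\epsilon_i y_i^2+\epsilon_x(x^4+\lambda x^2)$: the quartic in the normal direction is forced by equivariance (all odd normal derivatives vanish on the fixed locus), the boundary critical point persists on both sides of $\lambda=0$ while flipping its stability, and an interior critical point is emitted or absorbed. Detecting that this quartic degeneracy is codimension one requires the quasi-homogeneous form $B\Phi$ of Definition~\ref{def:deg_quad}, not the Thom--Boardman count for an $A_2$ point. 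Your codimension-two accounting of boundary splitting is consistent with the paper's $\cF^{\ge 2}_2$, but the rest of the argument needs the equivariant framework to get off the ground.
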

We refer to Subsection~\ref{sub:beefurcate} for a detailed description of these three events. As an example,
we will describe the boundary splitting
as a codimension~2 behavior and show how it can be presented as a sequence of codimension~1 events, see Subsection~\ref{sub:moving}.

The main idea of the proof is to pass from a function on $N$ to a function on its double and consider it as a $\Z_2$-equivariant function. Then, we study codimension~1 singularities of $\Z_2$-equivariant functions via equivariant Thom transversality theorem
as in \cite{Wall_jet}.

The structure of the paper is the following. In Section~\ref{sec:bmorse},
we recall basic properties of Morse functions for manifolds with boundary. We notice that there are essentially three non-equivalent notions of a Morse function for manifolds with boundary. We give a translation between the two of the three notions.
Finally, we recall equivariant jet theory of \cite{Wall_jet}. It gives a framework for stating and proving genericity functions. Section~\ref{sec:cerf} develops theory of families of equivariant Morse functions. Theorem~\ref{thm:main} is proved in Section~\ref{sec:pmain}. As an example, we show how boundary splitting can be decomposed into a birth  and a collision in Subsection~\ref{sub:moving}.

\begin{Ack}
	The authors would like to thank Mark Powell and Stefan Friedl for fruitful conversations. The first author was covered by the Polish NCN Opus grant 2024/53/B/ST1/03470. 
\end{Ack}

\section{Boundary Morse functions}\label{sec:bmorse}
\subsection{Definitions}\label{sub:defini}
Given a closed manifold $X$, a function $F\colon X\to\R$ is called \emph{Morse}, if for
any $p\in\Crit(F)$, $\det D^2F(p)\neq 0$.
Hereafter, $\Crit(F)$ is the set of critical points of a function. If $DF(p)=0$, then the condition $\det D^2F(p)\neq 0$
is independent on the choice of coordinates.

Following \cite{BNR} we extend this definition for manifolds with boundary.
Let $N$ be a smooth manifold with boundary.
\begin{definition}\label{def:bmorse}
  A function $F\colon N\to\R$ is \emph{boundary Morse} if
  \begin{enumerate}[label=(BM-\arabic*)]
    \item for any $p\in N$, if $DF(p)=0$, then $\det D^2F(p)\neq 0$;\label{item:nondeg}
    \item for any $p\in\partial N$, with $f:=F|_{\partial N}$, if $Df(p)=0$, then $DF(p)=0$.\label{item:partial}
  \end{enumerate}
\end{definition}
For future reference, we recall the boundary Morse lemma, see \cite[Lemma 2.6]{BNR}.
\begin{lemma}\label{lem:bmorse}
  Suppose $p\in\partial N$ is a critical point of a boundary Morse function. Then, there are local coordinates near $p$,
  $x,y_1,\dots,y_{n-1}$, and choices of signs $\epsilon_x,\epsilon_1,\dots,\epsilon_{n-1}$, such that $\partial N=\{x=0\}$ and
  \begin{equation}\label{eq:bmorse}F(x,y_1,\dots,y_{n-1})=F(p)+\epsilon_x x^2+\sum_{i=1}^{n-1}\epsilon_i y_i^2.\end{equation}
\end{lemma}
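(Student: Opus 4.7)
The plan is to reduce Lemma~\ref{lem:bmorse} to three classical ingredients: the Morse lemma on the closed manifold $\partial N$, the parametric Morse lemma in the $y$-directions with $x$ as a parameter, and the one-variable Morse lemma in $x$. The first step is to show that $f := F|_{\partial N}$ is Morse at $p$, i.e.\ that $D^2 f(p)$ is non-degenerate. Since $DF(p) = 0$, certainly $Df(p) = 0$, and (BM-2) together with (BM-1) force the critical points of $f$ near $p$ to be contained in the critical points of $F$, hence isolated. Combined with a direct analysis of the $2$-jet of $F$ at $p$ under the boundary-Morse hypothesis of \cite{BNR}, this rules out degeneracy of $D^2 f(p)$.

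With $f$ Morse at $p$, the classical Morse lemma on $\partial N$ produces coordinates $y_1, \ldots, y_{n-1}$ centered at $p$ with $f(y) = F(p) + \sum \epsilon_i y_i^2$. Choosing a collar $\partial N \times [0, \varepsilon) \hookrightarrow N$, extend these to local coordinates $(x, y_1, \ldots, y_{n-1})$ on $N$ with $\partial N = \{x = 0\}$ and $x \geq 0$; then $F(0, y) = F(p) + \sum \epsilon_i y_i^2$.

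Now apply the parametric Morse lemma in $y$ with $x$ as a parameter. Since $F(0, \cdot)$ is Morse at $y = 0$, the implicit function theorem yields a smooth curve $x \mapsto y^*(x)$ of critical points of $F(x, \cdot)$ with $y^*(0) = 0$, and a smooth family $\phi_x$ of local diffeomorphisms of a neighborhood of $0 \in \R^{n-1}$, with $\phi_x(0) = y^*(x)$, such that
\[
F(x, \phi_x(y)) = c(x) + \sum_i \epsilon_i y_i^2, \qquad c(x) := F(x, y^*(x)).
\]
A direct computation gives $c'(0) = \partial_x F(p) = 0$, while $c''(0)$ equals twice the Schur complement of the $y$-block in $D^2 F(p)$. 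Since $\det D^2 F(p) \neq 0$ by (BM-1) and $\det(\partial_y^2 F(p)) \neq 0$ by the Morse property of $f$, we conclude $c''(0) \neq 0$. The one-variable Morse lemma applied to $c$ then produces a smooth reparametrization $\tilde x(x)$ with $\tilde x(0) = 0$ and $c(x) - F(p) = \epsilon_x \tilde x^2$, where $\epsilon_x = \sgn(c''(0))$.

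The combined change of coordinates $(x, y) \mapsto (\tilde x(x), \phi_x^{-1}(y))$ depends smoothly on both variables, preserves $\partial N = \{x = 0\} = \{\tilde x = 0\}$, and realizes the normal form~\eqref{eq:bmorse}. The main technical obstacle is the first step — extracting non-degeneracy of $D^2 f(p)$ from the boundary-Morse hypothesis — which is where the precise formulation of the definition matters, since it need not follow from (BM-1)+(BM-2) in complete generality without examining the $2$-jet of $F$ along the kernel of $D^2 f(p)$. Once that step is in place, the remainder is careful bookkeeping of three well-known Morse-type lemmas stitched together so that the boundary stays fixed.
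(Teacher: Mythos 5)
The paper itself gives no proof of this lemma --- it is quoted from \cite[Lemma 2.6]{BNR} --- so there is nothing internal to compare against; I judge your argument on its own. Steps 2--4 of your plan (Morse lemma for $f$ on $\partial N$, parametric Morse lemma in the $y$-variables with $x$ as a parameter, then the one-variable Morse lemma applied to $c(x)=F(x,y^*(x))$, with $c''(0)\neq 0$ coming from the Schur-complement factorization of $\det D^2F(p)$) are sound, boundary-preserving, and constitute the standard proof. The problem is entirely in Step 1, which you yourself flag as the main obstacle and then do not actually carry out: ``a direct analysis of the $2$-jet \dots rules out degeneracy'' is an assertion, not an argument, and the isolatedness of the critical points of $f$ near $p$ is no help, since isolated degenerate critical points exist.

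Worse, Step 1 cannot be completed from Definition~\ref{def:bmorse} as literally stated, because \ref{item:nondeg} and \ref{item:partial} do \emph{not} imply that $D^2f(p)$ is non-degenerate. Take $N=\{x\ge 0\}\subset\R^2$ and $F(x,y)=xy+y^3$ near the origin. Its only critical point is $(0,0)$, where $D^2F(0,0)=\left(\begin{smallmatrix}0&1\\1&0\end{smallmatrix}\right)$ is non-degenerate, so \ref{item:nondeg} holds; the only critical point of $f(y)=y^3$ is $y=0$, where indeed $DF=0$, so \ref{item:partial} holds. Yet $D^2f(0)=0$, and since non-degeneracy of $p$ as a critical point of $F|_{\partial N}$ is invariant under coordinate changes preserving $\{x=0\}$, no coordinates realizing \eqref{eq:bmorse} can exist for this $F$. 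So the non-degeneracy of $D^2f(p)$ must be supplied as an additional hypothesis rather than derived: it is built into the definition used in \cite{BNR}, and it also holds automatically for doublable functions, where Lemma~\ref{lem:block} makes $D^2F(p)$ block-diagonal so that \ref{item:nondeg} forces $\det D^2f(p)\neq 0$. Once that input is made explicit, the rest of your argument goes through.
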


We pause for the moment to clarify different variants of Definition~\ref{def:bmorse}.
Hajduk's $m$-functions satisfy~\ref{item:nondeg}, but a critical point of $f$ is assumed \emph{not to be} a critical point of $F$,
so the opposite of \ref{item:partial} is assumed. That is, $m$-functions share more properties of embedded Morse functions, see \cite{BP}.
In particular, a boundary Morse lemma for $m$-function should have term $x^2$ replaced by $x$, as in the case of embedded Morse
functions, compare \cite[Lemma 2.16]{BP}.

We will see below, 
Hajduk's $m$-function are more natural in the sense that the set of $m$-functions in open-dense, unlike the set of boundary Morse functions.
As a drawback, 
analogs of gradient-like vector fields in that case have to use embedded gradient-like vector fields, see \cite[Section 3]{BP}.
The latter
are not linearizable at critical points on $\partial N$. This leads to a more complicated local behavior of the trajectories of gradient-like vector fields for $m$-functions.

In \cite[Section 2.4]{KM}, Kronheimer and Mrowka use another definition, namely they consider the double $M$ of $N$ and look at functions
on $N$ that are restrictions of $\Z_2$ equivariant functions on $M$.
This approach is suitable for studying degenerations of Morse functions, via equivariant jet transversality theorem. As the main drawback we note that not all `regular' functions on $N$ extend to smooth symmetric functions on $M$. So the definitions seems rather restrictive.

Definition~\ref{def:bmorse} combines drawbacks and highlights of the two other approaches. Namely, studying gradient-like vector fields
is the easiest, and the conditions of Definition~\ref{def:bmorse} are less restrictive than those in \cite{KM}. On the other
hand, as we shall see in Proposition~\ref{prop:bummer}, functions satisfying \ref{item:partial} are not open-dense in the space of all Morse functions. This means that
studying families of Morse functions via transversality theory is substantially harder.

In Subsection~\ref{sub:doublable}, we explain how to pass from a boundary Morse function of Definition~\ref{def:bmorse} to
a Morse function in the sense of \cite{KM}. The passage will be done in a precise way, in particular not modifying the function near 
boundary critical points.  We will then study deformations of equivariant functions leading to the proof of Theorem~\ref{thm:main}.

We conclude Subsection~\ref{sub:defini} with the following observation. For a manifold with boundary $N$, denote $\cF(N)=C^\infty(N;\R)$, and let $\cB(N)\subset\cF(N)$
be the subspace of boundary Morse functions.
\begin{proposition}\label{prop:bummer}
  If $\partial N\neq \emptyset$, then $\cB(N)$ is neither open nor dense in $\cF(N)$.
\end{proposition}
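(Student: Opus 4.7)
The plan is to exhibit explicit witnesses for both failures. The underlying point is that condition~\ref{item:partial} imposes the scalar equation $\partial F/\partial \nu(p) = 0$ at every critical point $p$ of $f := F|_{\partial N}$, which is a genuine codimension-one constraint on $F$. Both parts of the proposition then follow from straightforward local computations, and I do not anticipate any substantial obstacle beyond globalizing the local model used in the first part.

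For \emph{non-density}, I would construct an explicit function that violates~\ref{item:partial} robustly. Pick any $q \in \partial N$ and local coordinates $(x, y_1, \dots, y_{n-1})$ centered at $q$ with $\partial N = \{x = 0\}$, and take $F \in \cF(N)$ which near $q$ has the form $F(x, y) = x + \sum_i \epsilon_i y_i^2$ (any smooth extension outside the chart will do). Then $q$ is a non-degenerate critical point of $f$, but $\partial F/\partial x(q) = 1 \neq 0$, so $F \notin \cB(N)$. For any $F' \in \cF(N)$ sufficiently $C^2$-close to $F$, the implicit function theorem applied to $\nabla f' = 0$ produces a unique nearby critical point $p'$ of $f'$, and the normal derivative $\partial F'/\partial x(p')$ remains non-zero by continuity. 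Hence $F'$ has a critical point of $f'$ that is not a critical point of $F'$, so $F' \notin \cB(N)$, giving a $C^\infty$-open neighborhood of $F$ disjoint from $\cB(N)$.

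For \emph{non-openness}, I would take an arbitrary $F \in \cB(N)$ and perturb it. Since $N$ is compact with $\partial N \neq \emptyset$, $f$ attains its extrema on the closed manifold $\partial N$, producing critical points of $f$ that by~\ref{item:partial} must be boundary critical points of $F$. Pick such a $p$ and, by Lemma~\ref{lem:bmorse}, choose coordinates $(x, y_1, \dots, y_{n-1})$ centered at $p$ with $\partial N = \{x = 0\}$ in which $F$ takes the normal form~\eqref{eq:bmorse}. Let $\chi$ be a smooth bump function compactly supported in this chart with $\chi \equiv 1$ in a smaller neighborhood of $p$, and set $F_\epsilon := F + \epsilon\, \chi(x,y)\, x$. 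Since the perturbation vanishes on $\{x = 0\}$, we have $F_\epsilon|_{\partial N} = f$, so $p$ remains a critical point of the boundary restriction; however, a direct computation at $p$ gives $(\partial F_\epsilon/\partial x)(p) = 0 + \epsilon \cdot 1 = \epsilon \neq 0$, so $p$ is no longer a critical point of $F_\epsilon$. Thus $F_\epsilon \notin \cB(N)$ for every $\epsilon \neq 0$, while $F_\epsilon \to F$ in the $C^\infty$ topology as $\epsilon \to 0$, showing that $F$ is not in the interior of $\cB(N)$.
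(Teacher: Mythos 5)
Your proof is correct and follows essentially the same route as the paper: for non-density you exhibit a function whose boundary restriction has a nondegenerate critical point at which the normal derivative is nonzero, and observe that both conditions persist under $C^2$-perturbation (the paper builds this witness by adding $\varepsilon x\eta$ to a boundary Morse function rather than writing the local model directly, but the mechanism is identical); for non-openness you add $\epsilon\chi x$ near a boundary critical point and let $\epsilon\to 0$, which is exactly the paper's sequence $\wt{F}_n=F+\frac1n x\eta$. Your explicit remark that compactness of $\partial N$ forces every boundary Morse function to have boundary critical points is a small but welcome clarification that the paper leaves implicit.
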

\begin{proof}
  The culprit is the specific condition~\ref{item:partial}. Indeed, if $p\in\partial N$ is such that $DF(p)=0$, then
  \ref{item:partial} imposes that $\partial_{\vec{n}}F(p)=0$, where $\vec{n}\in T_pN\setminus\{0\}$ is normal to $\partial N$. This is a closed
  condition, and it
  leads to failure of both openness and density. 

  To see that failure, we construct a function that satisfies \ref{item:nondeg}, but not \ref{item:partial}, and show
  that no function in its neighborhood satisfies \ref{item:partial}.
  Suppose $F\colon N\to\R$ is a boundary Morse function and $p\in\partial N$ is a critical point. Choose
  local coordinates $(x,y_1,\dots,y_n)$ near $p$, so that $\partial N=\{x=0\}$ and $F$ has form 
  \eqref{eq:bmorse}. Choose a smooth function $\eta\colon N\to[0,1]$
  near $p$, equal to $1$ in a neighborhood of $p$ and supported in a set where the local coordinates are defined.
  Let $\wt{F}=F+\varepsilon x\eta$. Then, $\wt{F}$ fails to \ref{item:partial}, indeed,
  there is a neighborhood $U$ of $p$ such that $\frac{\partial\wt{F}}{\partial x}\ge\frac23\varepsilon$ on $U$.

  Suppose $F'$ is close to $\wt{F}$, that is $\dist_{C^2}(F',\wt{F})<\delta$ for $\delta>0$ sufficiently small. In particular, if $\delta<\frac{\varepsilon}{3}$, then
  $\frac{\partial F'}{\partial x}>\frac13\varepsilon$ everywhere on $U$. Next, $\dist_{C^2}(F'|_{\partial N},F|_{\partial N})<\delta$. The function $F|_{\partial N}$
  is Morse in the ordinary sense, which is an open condition in the $C^2$-norm. If $\delta$ is sufficiently small,
  then $F'|_{\partial N}$ must have a Morse critical point in $U\cap\partial N$.
  Hence, $F'$ is not Morse, because it does not satisfy \ref{item:partial}.

  This shows that an open neighborhood of $F'$ consists of non-Morse functions, leading to the failure of density. Also, by taking
  $\wt{F}_n=F+\frac1n x\eta$, we obtain a sequence of functions failing to \ref{item:partial} converging to the Morse function $F$.
  That is, $\cB(N)$ is not open.
\end{proof}
\begin{remark}\label{rem:subtle}
  Failure to openness and density of boundary Morse functions makes it hard to define precisely what is a \emph{generic path of boundary Morse
  functions}, Formally one would have to consider e.g. generic functions withing the closure of boundary Morse functions in the space of all functions. We do not find applications for  this rather delicate notion. In the present paper, we consider mostly paths of functions
  on $N$ that are restrictions of paths of equivariant functions on the double of $N$.
\end{remark}
\subsection{Index of a critical point}\label{sub:index}
Suppose $F\colon N\to\R$ is a Morse function for a manifold with boundary.
Assume $p$ is a critical point of $F$.

The index of a critical point $p$, $\ind p$ is by definition the dimension
of the negative subspace of $D^2F(p)$: the index of local mininum is equal to $0$,
the index of a local maximum is equal to $\dim N$.

If $p\in\partial N$ is a critical point, we can also consider the restriction $f:=F|_{\partial N}$ and consider
$\pind p$ to be the dimension of the negative subspace of $D^2f(p)$. Informally,
$\ind p=\pind p$ if the function $N$ is increasing when going from $p$ to
the interior, while $\ind p=\pind p+1$ if $F$ is decreasing in the normal direction.

\begin{definition}\label{def:inddef}
	A critical point $p\in\partial N$ is called \emph{boundary stable} if
	$\ind p=\pind p+1$. It is called \emph{boundary unstable} if $\ind p=\pind p$; see Figure~\ref{fig:bstable}.
\end{definition}
\begin{figure}
  \begin{tikzpicture}
    \draw (2,0) node {\includegraphics[width=2.5cm]{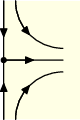}};
    \draw (-2,0) node {\includegraphics[width=2.5cm]{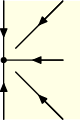}};
  \end{tikzpicture}
  \caption{Boundary stable (left) and boundary unstable (right) critical points. We have sketched the flow of the gradient of
  the corresponding Morse function.}\label{fig:bstable}
\end{figure}
In \cite{KM,BNR}, boundary stability was defined using gradient vector fields. In that setting, a
critical point $p$ is boundary stable if the unstable manifold of $p$ is contained
in $\partial N$. Here, unstable manifold is with respect to vector field $\nabla F$
(\cite{KM} uses $-\nabla F$). The two definitions are equivalent. In fact,
if $\ind p=\pind p$, then the negative definite part of $D^2F(p)$ is
contained in $T_p\partial N$. This implies that the tangent space to the stable
manifold of $\nabla F$ at $p$ is contained in $T_p\partial N$. As \cite{KM,BNR} it was assumed that $\nabla F$ is tangent to $\partial N$, it quickly follows that $W^s(p)$ is contained in $\partial N$, so the critical point is boundary unstable. 

The advantage of Definition~\ref{def:inddef} is that it does not require introducing any auxilliary objects such as a Riemannian metric or a gradient-like vector field.

\subsection{Doublable functions}\label{sub:doublable}
We now realize our plan to compare boundary Morse functions of Definition~\ref{def:bmorse} with
the approach of \cite{KM}. 
Let $N$ be a manifold with boundary. The \emph{double} of $M:=\cD(N)$ is the boundary connected sum of a manifold with a copy itself, $\cD(N)=N\cup_{\partial N} N$. To form a double, we fix an identification $\tau$ of $N$ with its copy. This identification leads to an involution $\tau\colon M\to M$.
With that choice, $M$ has a structure of a $\Z_2$ manifold. The fixed point
set $M^{\Z_2}$ is $\partial N$. 

Any continuous function $P\colon N\to\R$ can be extended to a function $\Phi\colon N\to\R$ by declaring that $\Phi(z)=\Phi(\tau z)=P(z)$,
for all $z\in N$. The extension $\Phi$ is continuous, but not necessarily smooth.
\begin{definition}
  A function $P\colon N\to\R$ is called \emph{doublable} if the extension $\Phi$ of $P$ to $M$ is a smooth function.
\end{definition}
We define $\cP(N)$ to be the set of doublable functions. 
\begin{proposition}\label{prop:dbldbl}
  Suppose $F\colon N\to\R$ is a boundary Morse function. Then, there exists a function $P\colon N\to\R$ with the following properties.
  \begin{enumerate}[label=(F-\arabic*)]
    \item $P$ is doublable;\label{item:Fdouble}
    \item $\dist_{C^0}(F,P)$ can be as small as we please, moreover $F$ agrees with $P$ away from an arbitrarily small
      neighborhood of $\partial N$;\label{item:Fsmall}
    \item $P$ has the same set of critical points as $F$;\label{item:Fcrit}
    \item $P$ agrees with $F$ in a neighborhood of these critical points;\label{item:Fagr}
    \item there exists a family $F_\sigma$, $\sigma\in[0,1]$ of boundary Morse functions, such that
      $F_0=F$, $F_1=P$ and $F_\sigma$
      is independent of $\sigma$ near any critical point of $F$.\label{item:Ffam}
  \end{enumerate}
\end{proposition}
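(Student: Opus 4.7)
The plan is to exploit the fact that boundary Morse functions are already locally doublable near each boundary critical point, and to modify $F$ only on a collar of $\partial N$ away from these critical points.

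First, by Lemma~\ref{lem:bmorse}, near each $p\in \Crit(F)\cap\partial N$ there are local coordinates $(x,y_1,\dots,y_{n-1})$ with $\partial N=\{x=0\}$ in which $F$ has the form \eqref{eq:bmorse}. This is an even function of $x$, so all odd $x$-derivatives of $F$ vanish identically at $x=0$ in a whole neighborhood $W_p$ of $p$; equivalently, the tautological $\Z_2$-invariant extension of $F|_{W_p}$ to the double is smooth. Fix such $W_p$ and choose smaller neighborhoods $W_p'$ with $\overline{W_p'}\subset W_p$.

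Second, pick a collar $U\cong\partial N\times[0,\varepsilon)$ of $\partial N$ with $\varepsilon$ small enough that $\Crit(F)\cap U\subset\partial N$. By Seeley's extension theorem, $F|_U$ admits a smooth extension $\widetilde F$ to $\partial N\times(-\varepsilon,\varepsilon)$. Set
\[
F_{\mathrm{sym}}(x,y)=\tfrac{1}{2}\bigl(\widetilde F(x,y)+\widetilde F(-x,y)\bigr),
\]
which is even in $x$ and hence doublable. Note that $F_{\mathrm{sym}}(0,y)=F(0,y)$ and $\partial_y F_{\mathrm{sym}}(0,y)=\partial_y F(0,y)$, so the upcoming modification changes $F$ only to higher tangential order along $\partial N$.

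Third, introduce cutoffs $\chi_y\colon\partial N\to[0,1]$ equal to $1$ on $(\bigcup_p W_p')\cap\partial N$ and supported in $(\bigcup_p W_p)\cap\partial N$, and $\chi_x\colon[0,\varepsilon)\to[0,1]$ equal to $1$ for $x\le\delta$ and to $0$ for $x\ge2\delta$, with $\delta<\varepsilon/2$. Define
\[
P(x,y)=F(x,y)+(1-\chi_y(y))\chi_x(x)\bigl(F_{\mathrm{sym}}(x,y)-F(x,y)\bigr)
\]
on $U$ and $P=F$ outside $U$. A pointwise calculation of odd $x$-derivatives at $x=0$ gives
\[
\partial_x^{2k+1}P(0,y)=\chi_y(y)\,\partial_x^{2k+1}F(0,y),
\]
which vanishes because on $\operatorname{supp}(\chi_y)$ the function $F$ was already doublable by the first step. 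Hence $P\in\cP(N)$, yielding~\ref{item:Fdouble}. By construction $P=F$ outside a collar of width $2\delta$ and $|P-F|\le\|F_{\mathrm{sym}}-F\|_{C^0}\to0$ as $\delta\to0$, giving~\ref{item:Fsmall}; near each critical point $\chi_y\equiv1$, so $P=F$ there, giving~\ref{item:Fagr}.

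Finally, to verify~\ref{item:Fcrit} and~\ref{item:Ffam} it suffices to show that, for $\varepsilon,\delta$ small, no $F_\sigma:=F+\sigma(P-F)$ with $\sigma\in[0,1]$ acquires additional critical points. On the modification region, $\partial N$ contains no critical points of $F|_{\partial N}$, so $\partial_y F(0,y)\ne0$ there; since $F_{\mathrm{sym}}-F$ and $\partial_y(F_{\mathrm{sym}}-F)$ both vanish at $x=0$, we have $\partial_yF_\sigma(0,y)=\partial_y F(0,y)\ne0$, and by continuity $DF_\sigma\ne0$ throughout a sufficiently narrow collar, uniformly in $\sigma$. The main subtlety is arranging the $y$-cutoff $\chi_y$ so that its support lies within the set where $F$ is \emph{already} doublable along $\partial N$; this is what makes the convex-combination interpolation preserve doublability, and it is enabled precisely by the explicit normal form in Lemma~\ref{lem:bmorse}.
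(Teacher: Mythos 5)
Your overall strategy is the same as the paper's: keep $F$ untouched near its boundary critical points, where the boundary Morse Lemma~\ref{lem:bmorse} already makes $F$ even in the normal coordinate; modify $F$ on the remainder of a thin collar so that all odd normal derivatives vanish along $\partial N$; take the linear interpolation as the path; and rule out new critical points by a uniform lower bound on the tangential derivative away from the boundary critical points. The only substantive difference is the choice of modification: the paper writes $F(z,x)=F(z,0)+xG(z,x)$ by Hadamard's lemma and flattens the normal dependence to $F(z,0)$, whereas you symmetrize a Seeley extension. Either device works for items \ref{item:Fsmall}--\ref{item:Ffam}.

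There is, however, a gap in your verification of \ref{item:Fdouble}. The identity $\partial_x^{2k+1}P(0,y)=\chi_y(y)\,\partial_x^{2k+1}F(0,y)$ is computed in the collar coordinate $x$, while the vanishing of the right-hand side on $\supp\chi_y$ is justified by the evenness of $F$ in the \emph{normal-form} coordinate of Lemma~\ref{lem:bmorse}. These are different coordinates unless you arrange otherwise, and evenness is not preserved under a change of normal coordinate: if $x_{\mathrm{nf}}=x+x^2h(y)$ with $h\not\equiv 0$, then $\epsilon_x x_{\mathrm{nf}}^2=\epsilon_x\bigl(x^2+2x^3h(y)+\cdots\bigr)$ has a nonvanishing third $x$-derivative at $x=0$, so the symmetric extension fails to be smooth in the collar chart. (Equivalently, the smooth structure on the double is built from a chosen collar, and the identity map between the structures induced by two different collars need not be a diffeomorphism.) You must therefore choose the collar $U\cong\partial N\times[0,\varepsilon)$ so that on each $W_p$ its normal coordinate coincides with the $x$ of Lemma~\ref{lem:bmorse}; this is precisely the step the paper carries out by constructing the collar from the nearest-point projection of a metric that is Euclidean in the normal-form charts. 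With that adjustment your argument goes through: the estimate $\partial_yF_\sigma(0,y)=\partial_yF(0,y)\ne 0$ and its propagation to a sufficiently narrow collar, uniformly in $\sigma$ and $\delta$, is correct, though you should also record that $F_\sigma|_{\partial N}=F|_{\partial N}$, so condition \ref{item:partial} for the interpolating family is inherited from $F$.
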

\begin{proof}
  By the collar theorem, there exists a neighborhood $V$ of $\partial N$ such that $V$ can be identified
  with $\partial N\times[0,\varepsilon)$.
  We write $\theta\colon V\to \partial N\times[0,\varepsilon)$.
  
  For each critical point $p\in\partial N$ of $F$ we fix  a neighborhood  $U_p$ on which $F$
  has local coordinates as in  boundary Morse Lemma~\ref{lem:bmorse}.
  We assume  the identification $\theta$ takes $U_p\cap V$ to
  $\left( U_p\cap\partial N \right) \times[0,\varepsilon]$ via $\theta(x,y_1,\dots,y_{n-1})=(0,y_1,\dots,y_{n-1})\times\{x\}$.

  This specific property can be achieved by constructing  $\theta$  as follows:  choose a Riemannian metric
  and  for each $z\in N$ consider the nearest point $z'\in\partial N$ on the boundary (with the smallest distance).
  By the implicit function theorem  this assignment is well-defined on a neighborhood $V$ of $\partial N$ (the
  neighborhood depends on the metric). We define $\theta(z)=(z',\dist(z,z'))$.

  Having fixed the identification, we use Hadamard Lemma to write
  \[F(z,x)=F(z,0)+xG(z,x).\]
  for $(z,x)\in \partial N\times[0,\varepsilon)$, where $G$ is a smooth function.
  Now choose a bump function  $\eta\colon N\to[0,1]$ equal to $0$ away from $\partial N\times[0,\varepsilon/2]$. 
  Our aim is to set
  \begin{equation}\label{eq:pdef}
    P(z,x)=F(z,0)+(1-\eta)xG(z,x).
  \end{equation}
  That is, $P(z,x)=F(z,x)$ on $\partial N\times[\varepsilon/2,\varepsilon)$ and setting $P$ equal to $F$
  away from $V$ leads to a smooth function.
  The choice of $\eta$ has to be made carefully, since we want $P=F$ near critical points of $F$.

  For a critical point $p\in\partial N$,  let $U'_p$ be a neighborhood such that $\ol{U'_p}\subset U_p$
  and $U'_p$ has product structure,
  $U'_p=(U'_p\cap\partial N)\times[0,\delta)$, where $\delta>\varepsilon$
  (we can first choose $\delta$ and then $\varepsilon$).
  Define $\eta$ to be $\eta_0\times\rho$, where $\eta_0\colon \partial N\to\R$,\
  and $\rho\colon[0,\varepsilon)\to[0,1]$ are such that
  \begin{enumerate}[label=(N-\arabic*)]
    \item $\rho\equiv 1$ on $[0,\varepsilon/4]$;\label{item:rhoconst}
    \item $\rho\equiv 0$ away from $[0,\varepsilon/2]$;
    \item $\eta_0\equiv 1$ away from $\bigcup U_p$;
    \item $\eta_0\equiv 0$ on $\bigcup U'_p$.
  \end{enumerate}
  Furthermore, we can assume that $|D\eta|<C_\eta\varepsilon^{-1}$ for some constant
  $C_\eta>0$. That constant does not depend on $\varepsilon$, but can depend on $U_p$ and $U'_p$, in general
  it might depend on the distance between boundaries of $U_p$ and $U'_p$. Now we define $P$ via \eqref{eq:pdef},
  and check all the required properties.
  
  \emph{Checking \ref{item:Fdouble}.} 
  A smooth function $G\colon N\to\R$ is doublable if its restriction to $\partial N\times[0,\varepsilon)$ satisfies
  $\frac{\partial^k G}{\partial x^k}(z,0)=0$ for all $z\in\partial N$ and for all odd $k\ge 1$.
 Our  $P$ satisfies this condition for all $k$ odd.
 Indeed,  if $z\notin  \bigcup U_p$, then $P(z,x)=P(z,0)$ for all $x$ sufficiently small. 
  
  Suppose $z\in U_p$. The specific form of $\eta=\eta_0\rho$ together with \ref{item:rhoconst},
  implies that $\frac{\partial^k}{\partial x^k}\eta(z,0)=0$ for all $x$. Furthermore, by boundary Morse lemma,
  $xG(z,x)=\pm x^2$,
  where the sign depends on boundary stability of the critical point $p$.
  It is routine to check that $\frac{\partial^k P}{\partial x^k}(z,0)=0$
  for all $k\neq 0,2$.

  \emph{Checking \ref{item:Fsmall}.} Notice that $\dist_{C^0(N;\R)}(F,P)\le \sup t|G(z,t)|$, where
  the supremum is taken over the $(z,t)\in\partial N\times[0,\varepsilon)$.
  To show that this can be made as small as we please, notice
  that by the mean value theorem applied to $x\mapsto F(z,x)$
 $G(z,x) = \frac{\partial F}{\partial x} (z,\zeta_x)$ for some $\zeta_x\in(0,x)$. This means that $||G(z,t)||\le ||F(z,t)||_{C^1}$.
  Hence, the distance between $F$ and $P$ in the $C^0$-norm is bounded by $\varepsilon||F||_{C^1}$.

  \emph{Checking \ref{item:Fcrit} and \ref{item:Fagr}}. The number of critical points of $F$ is finite, hence,
  for $\varepsilon>0$ sufficiently small, the region $\partial N\times[0,\varepsilon)$ contains
  only critical points of $F$ on $\partial N$. From this we conclude that there is a global constant $c$
  such that $||DF(z,0)||>c$ for all $z\in \partial N\times[0,\varepsilon)\setminus\bigcup U'_p$. Suppose $||DG||<C$.
  On choosing $\varepsilon$ such that $\varepsilon C<c$, we achieve that the derivative of $P$
  in the $z$-directions is non-zero. That is, $P$ has no critical points on $\partial N\times[0,\varepsilon)$
  away from $\bigcup U'_p$, but on each $U'_p$, $P=F$.
  Given \ref{item:Fcrit}, item~\ref{item:Fagr} is trivial, because $P=F$ near critical points of $F$.

  \emph{Checking \ref{item:Ffam}}.
  We define $F_\sigma(z,x)=F(z,0)+(1-\sigma\eta(z,x))xG(z,x)$. The proof that $F_\sigma$ has no critical
  points other than $F$ follows the same pattern as the proof of \ref{item:Fcrit} above.
  Also, $F_\sigma=F$ on each of the $U'_p$, $F_0=F$
  and $F_1=P$.
\end{proof}

\subsection{Equivariant Morse theory}
Morse theory for functions on manifolds with group actions was studied by Wassermann \cite[Section 4]{Was}.
We quote a few results, restricted to our specific situation of $G=\Z_2$, $\dim M^{\Z_2}=\dim M-1$. General results can be found in \cite{Was}.
Throughout the subsection, we assume that $M$ is a smooth closed manifold, acted upon by $G=\Z_2$.
We denote by $\tau\colon M\to M$ the involution.
\begin{definition}
  Suppose $\Phi\colon M\to\R$ is an equivariant function.
  We say $f$ is \emph{equivariant Morse} if for any critical point $p$ of $\Phi$, $D^2\Phi(p)$ is non-degenerate.
\end{definition}
The definition is analogous to all previous definitions of Morse functions.
\begin{lemma}\label{lem:doubl}
  Suppose $P\colon N\to\R$ is a doublable function. Let $M$ be the double of $N$ and  $\Phi\colon M\to\R$ be 
  the extension of $P$ to a $\Z_2$-invariant function on $M$.
  Then $P$ is boundary Morse if and only if the function $\Phi$ is equivariant Morse.
\end{lemma}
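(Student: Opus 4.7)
The proof will be essentially local, comparing the Hessians of $P$ and $\Phi$ at each critical point. I will first check that the sets of critical points of $P$ (in the boundary Morse sense) and of $\Phi$ correspond, then match their Hessians.

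The plan is to fix a boundary collar $\partial N\times[0,\varepsilon)$ with coordinate $x$ on $[0,\varepsilon)$ and coordinates $y=(y_1,\dots,y_{n-1})$ on $\partial N$, so that the involution $\tau$ sends $(x,y)$ to $(-x,y)$ after extending across the boundary. The first observation I need is the criterion for a smooth $P$ defined near $x\ge 0$ to extend smoothly and $\tau$-equivariantly, namely that $\partial_x^k P(0,y)\equiv 0$ for every odd $k\ge 1$; this is the standard Whitney-type characterization of even functions already used in Proposition~\ref{prop:dbldbl}. In particular, every doublable function automatically satisfies $\partial_x P(0,y)=0$ identically on $\partial N$, so condition \ref{item:partial} is automatic for doublable functions, and a point $p\in\partial N$ is a critical point of $P$ (in the boundary Morse sense) if and only if $Df(p)=0$ for $f=P|_{\partial N}$.

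Next I identify critical points of $\Phi$. In the interior of the two halves of $M$, $\Phi$ coincides with $P$ and critical points come in $\tau$-pairs; the Hessians are equal to that of $P$. At a boundary point $p\in\partial N\subset M$, evenness of $\Phi$ in $x$ gives $\partial_x\Phi(x,y)=-\partial_x\Phi(-x,y)$, so $\partial_x\Phi(p)=0$ automatically; thus $D\Phi(p)=0$ is equivalent to $Df(p)=0$, which by the previous paragraph matches exactly the boundary critical points of $P$. Differentiating once more in $y_i$, the oddness of $\partial_x\Phi$ in $x$ gives $\partial_{x}\partial_{y_i}\Phi(p)=0$, so the Hessian of $\Phi$ at a boundary critical point is block-diagonal:
\[
D^2\Phi(p)=\begin{pmatrix}\partial_{xx}\Phi(p) & 0 \\ 0 & D^2 f(p)\end{pmatrix},
\]
and the same formula holds for $D^2P(p)$ because $P$ has the same Taylor series as $\Phi$ on $\{x\ge 0\}$.

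Combining these computations, $\det D^2\Phi(p)=\partial_{xx}\Phi(p)\cdot\det D^2f(p)=\det D^2P(p)$ at every boundary critical point, and the corresponding equality is trivial at interior critical points. So non-degeneracy of $D^2\Phi$ at all critical points of $\Phi$ is precisely condition~\ref{item:nondeg} for $P$ at all of its critical points; condition~\ref{item:partial} is automatic. The main point to be careful about is purely notational: checking that the Hessian computed on $N$ (a manifold with boundary, so one has to extend $P$ to a neighborhood to make sense of $D^2P(p)$ at boundary critical points) agrees with the Hessian of $\Phi$, which is guaranteed because $\Phi$ is itself such a smooth extension. No step is a serious obstacle; the content is in correctly identifying that doublability forces \ref{item:partial} to be automatic and forces the off-diagonal block of $D^2\Phi(p)$ to vanish.
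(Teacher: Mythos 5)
Your argument is correct and is exactly the ``routine checking of the axioms'' that the paper's proof leaves to the reader: you identify that doublability makes condition~\ref{item:partial} automatic and forces the off-diagonal block $\partial_x\partial_{y_i}\Phi(p)$ to vanish (the latter is the paper's Lemma~\ref{lem:block}), so the Hessians and critical sets of $P$ and $\Phi$ match up. No gaps; this is the intended proof, written out.
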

\begin{proof}
  The proof is a routine checking of the axioms, we leave them to the reader.
\end{proof}

 There is an equivariant version of Morse lemma.
\begin{lemma}
	Suppose $\Phi\colon M\to\R$ is an equivariant Morse function. There exists
	local coordinates $(x,y_1,\dots,y_{n-1})$ near $p$ and a choice of signs
        $\epsilon_x,\epsilon_{y_1}\dots,\epsilon_{y_{n-1}}\in\{-1,1\}$ such that
		\[\Phi(x,y_1,\dots,y_{n-1})=\Phi(p)+\epsilon_x x^2+\sum_{i=1}^{n-1}\epsilon_{y_j}y_j^2.\]
	\begin{itemize}
		\item if $\tau p\neq p$, then $\tau$ takes $x$ to $x$  and $y_j$ to $y_j$,
		\item if $\tau p=p$, then $\tau$ takes $x$ to $-x$ and $y_j$ to $y_j$.
	\end{itemize}
   \end{lemma}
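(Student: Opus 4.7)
The proposal is to split into the two cases, $\tau p \ne p$ and $\tau p = p$, that already appear in the statement, and to handle each by reducing to the non-equivariant Morse lemma after first making the $\Z_2$-action simple.

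In the free case $\tau p \ne p$, pick disjoint open neighborhoods $U \ni p$ and $\tau U \ni \tau p$. Apply the ordinary Morse lemma at $p$ to obtain coordinates $(x,y_1,\dots,y_{n-1})$ on $U$ centered at $p$ with
\[
\Phi = \Phi(p) + \epsilon_x x^2 + \sum_{j=1}^{n-1}\epsilon_{y_j} y_j^2 .
\]
Transport these coordinates to $\tau U$ by declaring the coordinates of $q' \in \tau U$ to be the $(x,y_1,\dots,y_{n-1})$-coordinates of $\tau q' \in U$. Then $\tau$ is the identity in paired coordinates by construction, and because $\Phi \circ \tau = \Phi$ the same normal form holds on $\tau U$. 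This exhausts the first bullet.

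In the fixed case $\tau p = p$, the first step is to linearize the action near $p$. Starting from any chart $\psi_0 \colon U \to T_p M$ centered at $p$, form the averaged chart
\[
\psi(q) = \tfrac{1}{2}\bigl(\psi_0(q) + (d\tau_p)^{-1} \psi_0(\tau q)\bigr),
\]
which still has invertible differential at $p$ and now satisfies $\psi(\tau q) = d\tau_p \cdot \psi(q)$. Since $M^{\Z_2}$ has codimension $1$, the linear involution $d\tau_p$ on $T_p M$ has a $1$-dimensional $(-1)$-eigenspace and an $(n-1)$-dimensional $(+1)$-eigenspace; choose a linear coordinate $x$ on the former and $y_1,\dots,y_{n-1}$ on the latter so that $\tau$ acts by $(x,y_j) \mapsto (-x, y_j)$.

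It remains to put $\Phi$ into quadratic normal form without breaking this linear $\Z_2$-action. This is the only genuinely nontrivial step. The plan is to run the usual Moser-style proof of the Morse lemma in the equivariant category: first use the equivariant Hadamard lemma to write, for the $\Z_2$-invariant function $\Phi - \Phi(p)$,
\[
\Phi(z) - \Phi(p) = \tfrac{1}{2} z^{\mathsf{T}} A(z)\, z,
\]
where $A(z)$ is a symmetric matrix depending smoothly on $z$ and, because $\Phi$ and the bilinear pairing are $\tau$-invariant, satisfies $A(\tau z) = d\tau_p^{\mathsf{T}} A(z)\, d\tau_p$; this equivariance is automatic if one averages the Hadamard representation over $\Z_2$. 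Then apply the Morse homotopy argument to $A_t = (1-t) A(0) + t\, A(z)$: the vector field one integrates is constructed from the symmetric splitting $A_t = B_t^{\mathsf{T}} A(0) B_t$ after diagonalizing $A(0)$, and the whole construction is functorial in $A$, so $\tau$-equivariance of the family $A_t$ passes to $\tau$-equivariance of the resulting diffeomorphism. The main obstacle is exactly this equivariance bookkeeping in the Moser argument; once it is verified, the new coordinates realize the claimed normal form and $\tau$ continues to act by $(x,y_j) \mapsto (-x,y_j)$, completing the fixed case.
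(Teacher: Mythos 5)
Your argument is correct in outline, but it is a genuinely different route from the paper's: the paper offers no proof at all, deferring entirely to Wassermann's general equivariant Morse lemma \cite[Lemma 4.1]{Was}, whereas you give a self-contained construction (split free/fixed orbits, Bochner-linearize the involution by averaging a chart, then run an equivariant Hadamard-plus-matrix-square-root argument). Your free case and the linearization step are fine as written. In the fixed case there is one detail you should make explicit rather than bury in ``diagonalizing $A(0)$'': you need the linear change diagonalizing $A(0)=D^2\Phi(p)$ to preserve the $\pm1$-eigenspace splitting of $d\tau_p$, so that the distinguished coordinate $x$ really ends up in the $(-1)$-eigenspace. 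This is automatic, because the relation $A(0)=d\tau_p^{\mathsf T}A(0)\,d\tau_p$ forces the off-diagonal blocks of $A(0)$ with respect to that splitting to vanish (this is exactly the paper's Lemma~\ref{lem:block}), but without saying it the final normal form could a priori mix $x$ with the $y_j$. The remaining ``equivariance bookkeeping'' you flag does go through: with $B(z)=\bigl(A(0)^{-1}A(z)\bigr)^{1/2}$ one checks $B(\tau z)=d\tau_p^{-1}B(z)\,d\tau_p$, so $z\mapsto B(z)z$ is equivariant; alternatively the paper's own later machinery (the equivariant completion-of-squares in Lemma~\ref{lem:type_gen} and Lemma~\ref{lem:type_gen2}, which is Milnor's inductive proof with the $\epsilon$-equivariance of each Hadamard factor tracked) gives the same conclusion and would let you avoid the Moser/square-root formalism entirely. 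What your approach buys is independence from the external reference; what the paper's citation buys is brevity and coverage of general compact groups rather than just $\Z_2$.
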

The above lemma  follows from a more general statement, see~\cite[Lemma 4.1]{Was}.

Wassermann proves also the following result; see~\cite[Lemma 4.8]{Was}
\begin{lemma}
	Equivariant Morse functions are open-dense in the space $C^\infty_{\Z_2}(M;\R)$ of smooth equivariant functions.
\end{lemma}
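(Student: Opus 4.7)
The plan is to prove openness and density separately.

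\emph{Openness} is easy: the Morse condition is $C^2$-open in $C^\infty(M;\R)$ by a standard compactness/continuity argument (if every critical point has nondegenerate Hessian, the finite set of critical points persists and stays nondegenerate under small $C^2$-perturbations), and restricting to the closed subspace $C^\infty_{\Z_2}(M;\R)$ with the subspace topology preserves this.

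\emph{Density} requires more work. Given an equivariant $\Phi$, I aim to produce arbitrarily small equivariant perturbations making $\Phi$ Morse. I would choose a finite atlas $\{U_\alpha\}$ of $M$ consisting of charts of two types: (i) free charts $U_\alpha\subset M\setminus M^{\Z_2}$ disjoint from $\tau U_\alpha$, and (ii) invariant charts around fixed points, with coordinates $(x,y_1,\dots,y_{n-1})$ so that $\tau(x,y)=(-x,y)$ and $M^{\Z_2}\cap U_\alpha=\{x=0\}$. Using an equivariant partition of unity, it suffices to prove that for each $\alpha$, the set of equivariant functions Morse on a relatively compact $V_\alpha\Subset U_\alpha$ is open and dense; the finite intersection then yields the claim.

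For a type (i) chart, any smooth perturbation $\psi$ supported in $U_\alpha$ equivariantizes to $\psi+\psi\circ\tau$, and a standard Sard argument (e.g.\ adding a generic linear form in chart coordinates) makes the critical points in $V_\alpha$ nondegenerate; symmetry handles $\tau V_\alpha$. For a type (ii) chart, the classical description of $\Z_2$-invariants gives $\Phi(x,y)=\phi(x^2,y)$ for a smooth $\phi(u,y)$ defined near $\{u\geq 0\}\times\R^{n-1}$. A direct computation, using the block-diagonal structure of the Hessian on $\{x=0\}$, shows that $\Phi$ is Morse on $V_\alpha$ iff $\phi$ is Morse on the interior $\{u>0\}$ and at every critical point of $\phi|_{\{u=0\}}$ one has $\partial_u\phi\neq 0$ together with nondegenerate $D_y^2\phi$. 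Perturbing $\phi$ by $au+\langle b,y\rangle$ for generic $(a,b)\in\R\times\R^{n-1}$ achieves all these conditions simultaneously, by Sard applied to $(\partial_u\phi,\partial_y\phi)$ and to the boundary map $y\mapsto\partial_y\phi(0,y)$.

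The main difficulty is the fixed stratum: equivariance forbids perturbing $\Phi$ freely in the normal direction $x$, so classical Morse density does not apply. This is overcome precisely by passing to the invariant coordinates $(u,y)=(x^2,y)$, which reduces the question to ordinary transversality for $\phi$ on the half-space $\{u\ge 0\}\times\R^{n-1}$.
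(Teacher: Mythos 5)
Your proof is correct, but it is worth noting that the paper does not actually prove this lemma: it is quoted verbatim from Wassermann \cite[Lemma 4.8]{Was}, whose argument is set up for a general compact Lie group $G$ and proceeds through the orbit-type stratification and equivariant general position. What you supply instead is a self-contained, elementary proof tailored to the special case at hand ($G=\Z_2$ with fixed set of codimension one). The openness half is the standard $C^2$-argument and is unproblematic. The interesting part is your treatment of density along $M^{\Z_2}$: you invoke Whitney's even-function theorem to write $\Phi(x,y)=\phi(x^2,y)$, verify via the block-diagonal Hessian (the same computation as the paper's Lemma~\ref{lem:block}) that Morseness of $\Phi$ translates into Morseness of $\phi$ on $\{u>0\}$ plus the conditions $\partial_u\phi\neq 0$ and $\det D_y^2\phi\neq 0$ at critical points of $\phi|_{u=0}$, and then achieve everything with the linear-in-$(u,y)$ perturbation $au+\langle b,y\rangle$ and Sard. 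This is sound: the perturbation $ax^2+\langle b,y\rangle$ is equivariant, the chart-by-chart reduction via open-dense sets of functions Morse on $\overline{V_\alpha}$ is the standard Hirsch scheme, and the conditions on $\phi$ at $u=0$ are exactly what your Sard argument for the boundary map delivers. What your approach buys is transparency and independence from \cite{Was}; it also makes visible that in the invariant coordinates $(u,y)$ the generic boundary behavior is $\partial_u\phi\neq 0$, i.e.\ precisely Hajduk's $m$-function condition discussed in Subsection~\ref{sub:defini} --- a connection the citation route leaves implicit. What it gives up is generality: Wassermann's lemma covers arbitrary compact group actions, where no single global invariant-coordinate trick is available.
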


\begin{corollary}\label{lem:cor}
  The space of doublable boundary Morse functions is open-dense in the space $\cP(N)$ of smooth doublable functions.
\end{corollary}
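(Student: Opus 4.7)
The plan is to transfer Wassermann's open-dense result for equivariant Morse functions on $M$ across the correspondence between $\cP(N)$ and $C^\infty_{\Z_2}(M;\R)$.

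First, I would set up the doubling map $\Psi\colon \cP(N)\to C^\infty_{\Z_2}(M;\R)$ that sends a doublable $P$ to its symmetric smooth extension $\Phi$. By the definition of $\cP(N)$, $\Psi$ is well-defined, and it is injective because an equivariant function on $M$ is determined by its values on either half. It is also surjective: given any $\Phi\in C^\infty_{\Z_2}(M;\R)$, the restriction $P=\Phi|_N$ is smooth and its symmetric extension is $\Phi$ itself, so $P\in\cP(N)$. Thus $\Psi$ is a bijection.

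Next, I would topologize both sides with the standard Whitney $C^\infty$-topology and verify that $\Psi$ is a homeomorphism. In one direction, restriction of an equivariant smooth function to $N$ is continuous in all $C^k$-norms. In the other direction, for any compact $K\subset M$ and any $k$, one has $\|\Phi\|_{C^k(K)}\le \|P\|_{C^k(N)}$ since $\Phi$ on $\tau(N)$ is obtained from $P$ by precomposing with the fixed smooth involution $\tau$. Hence $\Psi$ and $\Psi^{-1}$ are both continuous, and $\Psi$ is a homeomorphism.

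Under $\Psi$, by Lemma~\ref{lem:doubl}, the subset of doublable boundary Morse functions is mapped precisely onto the subset of equivariant Morse functions in $C^\infty_{\Z_2}(M;\R)$. Wassermann's lemma (cited just before the corollary) states that the latter subset is open and dense in $C^\infty_{\Z_2}(M;\R)$. Since openness and density are preserved by homeomorphisms, pulling back along $\Psi$ gives that the set of doublable boundary Morse functions is open and dense in $\cP(N)$.

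The only part requiring any real care is confirming the bijection $\Psi$ and that it respects the topology; there is no obstacle here since the involution $\tau$ is a smooth diffeomorphism, so all seminorm estimates transfer directly. Once $\Psi$ is recognized as a homeomorphism identifying the two notions of Morse function, the corollary is immediate from Wassermann's theorem.
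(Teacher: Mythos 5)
Your argument is exactly the intended one: the paper states this as an immediate corollary of Lemma~\ref{lem:doubl} and Wassermann's open-density result, via the identification of $\cP(N)$ with $C^\infty_{\Z_2}(M;\R)$ given by doubling. Your additional verification that the doubling bijection is a homeomorphism for the $C^\infty$-topologies is correct and fills in the (omitted) details faithfully.
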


\subsection{Equivariant jet spaces}\label{sub:ejet}
Equivariant jet spaces are considered  in~\cite{Wall_jet}. There,  $G$  is a compact Lie group.
We recall the general setting and then  specialize to $G=\Z_2$.

 Suppose $V,W$ are two finite dimensional vector spaces over $\R$ with an orthogonal action of $G$.
 Consider $\cE(V)$, the space of smooth functions on $V$. Likewise,  let $\cE(V,W)$ be the space
 of smooth functions from $V$ to $W$. Clearly, $\cE(V,W)$ has a structure of a $\cE(V)$-module.
 We let $\cE^G(V)$ and $\cE^G(V,W)$
be the spaces of $G$-equivariant functions.

Let $\mm(V)$ be the maximal ideal at $0\in V$. We write $\mm^G(V)=\cE^G(V)\cap\mm(V)$ for the maximal ideal
at $0\in V$ of  $G$-invariant functions.
\begin{definition}
  The $G$-equivariant $k$-jet of a function $\Phi\in\cE^G(V,W)$ at $0$ is the class $\mj^k\Phi$ in the space
  \[\mI_G^k(V,W):=\cE^G(V,W)/\mm^G(V)^{k+1}\cdot\cE^G(V,W).\]
\end{definition}
This local description of an equivariant jet space admits a global version.
Suppose  $X,Y$ are both $G$-manifolds. There exists an equivariant jet bundle $\mI_G^k(X,Y)$ over $X\times Y$,
whose fiber at $(x,y)$ is the space $\mI^k_G(V,W)$, where $V$ and $W$ are equivariant
neighborhoods of $x$ in $X$ and $y$ in $Y$ respectively.
Each equivariant function $\Psi\colon X\to Y$ admits an equivariant  jet extension $\mj^k\Psi\colon X\to \mI_G^k(X,Y)$.

As in the classical non-equivariant situation, we define the space of multijets.
Recall that the bundle  $\mI^k_G(X,Y)$   over $X\times Y$,
admits two canonical projections:
\begin{itemize}
\item the \emph{source map} $\ma\colon\mI^k(X,Y)\to X$ and
  \item the \emph{target map} $\mb\colon\mI^k(X,Y)\to Y$.

\end{itemize}
For an integer  $s>1$ we  let $X^{(s)}$ be the subspace of the $s$-fold product
$X^s=X\times\dots\times X$ of $s$-tuples of pairwise distinct points.
We have the product map $\ma^s\colon \mI^k_G(X,Y)^s\to X^s$.
We define $\mI^{k,s}_G(X,Y)$ to be the preimage
\[\mI^{k,s}_G(X,Y):=\left({\ma^s}\right)^{-1}X^{(s)}\subset \mI^k_G(X,Y)^s.\]
Any  $G$-equivariant function $\Psi\colon X\to Y$ has a  $k$-multijet $s$-fold extension
$\mj^{k,s}\Psi\colon X^s\to\mI^{k,s}_G(X,Y)$.
We have the following result.
\begin{theorem}\expandafter{\cite[Theorem 2.1]{Wall_jet}} \label{thm:equiv}
  Suppose $W$ is a smooth submanifold of $\mI_G^{k,s}(X,Y)$. The set of $G$-equivariant functions $C^\infty_G(X,Y)$
  whose multijet extension is transverse to $W$ is residual in $C^\infty_G(X,Y)$.
\end{theorem}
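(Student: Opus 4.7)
The plan is to adapt the standard proof of the Thom multijet transversality theorem to the equivariant setting via parametric transversality, following Wall's approach. First I would reduce residuality to density on compact pieces: for each compact $K \subset X^{(s)}$, the subset of $\Psi \in C^\infty_G(X,Y)$ whose multijet extension $\mj^{k,s}\Psi$ is transverse to $W$ at every point of $K$ is open in the Whitney topology. Covering $X^{(s)}$ by countably many such $K_n$ reduces the theorem, via Baire category, to proving density of each of these sets.

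Density follows from a local parametric statement. Given $\Psi_0 \in C^\infty_G(X,Y)$ and $\xi_0 = (x_1, \ldots, x_s) \in X^{(s)}$, I would construct a finite-dimensional smooth family $\{\Psi_t\}_{t \in T}$ of equivariant perturbations with $\Psi_t|_{t=0} = \Psi_0$, supported in an arbitrarily small $G$-invariant neighborhood of $\bigcup_i G \cdot x_i$, such that the combined evaluation
\begin{equation*}
\Theta \colon U \times T \longrightarrow \mI_G^{k,s}(X,Y), \qquad (\xi, t) \longmapsto \mj^{k,s}\Psi_t(\xi)
\end{equation*}
is a submersion at $(\xi_0, 0)$, for a small neighborhood $U \subset X^{(s)}$ of $\xi_0$. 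Since a submersion is transverse to any submanifold, in particular $\Theta \pitchfork W$, applying the parametric Sard lemma to the projection $\Theta^{-1}(W) \to T$ produces arbitrarily small $t$ for which $\mj^{k,s}\Psi_t$ is transverse to $W$ on $U$. Covering $K$ by finitely many such neighborhoods and iterating the local perturbation -- using openness of transversality to preserve what was already achieved on a compact set -- yields a $C^\infty_G$-small perturbation of $\Psi_0$ that is transverse along all of $K$.

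The main obstacle, and the place where equivariance enters substantively, is constructing the family $\{\Psi_t\}$ that makes $\Theta$ a submersion. In the non-equivariant case one takes $\Psi_t = \Psi_0 + \sum_\alpha t_\alpha \rho_\alpha p_\alpha$ with $\rho_\alpha$ bump functions near the $x_i$ and $p_\alpha$ running over a polynomial basis of jets of degree $\le k$, and $\Theta$ is manifestly surjective onto every jet fiber. In the $G$-equivariant case I would choose pairwise disjoint $G$-invariant tubular neighborhoods of the orbits $G \cdot x_i$ (possible because the $x_i$ are distinct; if two of them lie on the same orbit a single neighborhood serves both), each modelled as $G \times_{H_i} S_i$ where $H_i$ is the stabilizer of $x_i$ and $S_i$ its slice representation. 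A $G$-equivariant function on such a tube corresponds tautologically to an $H_i$-equivariant function on $S_i$, so as parameter space $T$ I would take a polynomial basis of the $H_i$-equivariant $k$-jet space $\mI^k_{H_i}(S_i, T_{\Psi_0(x_i)}Y)$ at each orbit, each basis element multiplied by a $G$-invariant bump function and equivariantly extended. By the very definition of $\mI_G^k(X,Y)$, these basis elements surject onto each jet fiber at $x_i$; any constraints coming from two of the $x_j$'s lying on a single orbit are already built into $\mI_G^{k,s}(X,Y)$, so $\Theta$ is a submersion at $\xi_0$, as required. Assembling these local arguments via the Baire category reduction completes the proof.
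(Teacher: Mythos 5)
The paper offers no proof of Theorem~\ref{thm:equiv}: it is quoted from \cite[Theorem 2.1]{Wall_jet}, so there is no internal argument to compare yours against, and I can only assess the proposal on its own terms. Your architecture is the standard one and is correct in outline: Baire reduction to density over a countable compact exhaustion (with the usual caveat that openness of transversality along a compact $K\subset X^{(s)}$ requires exhausting $W$ by compact pieces as well, unless $W$ is closed), then a local finite-dimensional family of equivariant perturbations built from slice representations, then parametric Sard. For $s=1$, and more generally at tuples $(x_1,\dots,x_s)$ whose orbits are pairwise disjoint, your construction of $\{\Psi_t\}$ does make $\Theta$ a submersion, precisely because the fiber of $\mI^k_G$ is by definition a quotient of the space of germs of equivariant maps, so equivariant polynomial perturbations supported near $Gx_i$ fill it.

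The gap is the parenthetical claim that constraints coming from two of the $x_j$ lying on one orbit ``are already built into $\mI_G^{k,s}(X,Y)$.'' With the definition used in this paper, $\mI^{k,s}_G(X,Y)=\left(\ma^s\right)^{-1}X^{(s)}$ is the restriction of the full $s$-fold product of jet bundles to tuples of pairwise \emph{distinct points}, not pairwise distinct \emph{orbits}; no relation between the factors is imposed. If $x_2=gx_1\neq x_1$, then for every equivariant $\Psi$ the jet $\mj^k\Psi(x_2)$ is determined by $\mj^k\Psi(x_1)$, so over a neighborhood of $(x_1,x_2)$ the image of $\Theta$ lies in the graph of $g^*$ inside the product fiber, a proper submanifold; $\Theta$ is therefore never a submersion there, and no choice of equivariant perturbation family can repair this. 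Worse, the literal statement fails at such tuples: taking $W=\{(j,g^*j):j\in W_1\}$ for a submanifold $W_1$ of the single-jet fiber of codimension $c$ gives $\codim W=c+\dim(\text{fiber})$, which can exceed $s\dim X$ while hitting $W$ is equivalent to hitting $W_1$, a non-residual event when $c\le\dim X$. The correct formulation restricts $X^{(s)}$ to $s$-tuples with pairwise disjoint orbits (equivalently, works with tuples of orbits), and your proof goes through verbatim there. You should make that restriction explicit; it costs nothing for the applications in this paper, whose multijet strata (e.g.\ $\cF^{\ge 2}_1$) are already indexed by orbits of critical points.
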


\section{Cerf theory for manifolds with boundary}\label{sec:cerf}
\subsection{Decomposing space of equivariant functions}\label{sub:decomp}
Suppose $M$ is a $\Z_2$-manifold  obtained by doubling $N$ a compact manifold with boundary.
\begin{lemma}\label{lem:block}
  Suppose   $\Phi\colon M\to\R$ is a $\Z_2$-equivariant function, $p\in M^{\Z_2}$. Then
  the matrix of second derivatives $D^2\Phi(p)$   has block structure,
  $D^2\Phi(p)=A\oplus B$, where $A$ is the matrix of derivatives with respect to $y_1,\dots,y_{n-1}$
  and $B$ is the $1\times 1$ matrix with entry $\frac{\partial^2\Phi}{\partial x^2}(p)$.
\end{lemma}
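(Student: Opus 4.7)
The plan is to exploit the local normal form for the $\Z_2$-action near a fixed point and then differentiate the invariance identity twice. Since $M^{\Z_2}=\partial N$ is a smooth codimension-one submanifold containing $p$, I can first choose local coordinates $(x,y_1,\dots,y_{n-1})$ centered at $p$ in which $\partial N=\{x=0\}$ and in which the involution $\tau$ acts as the reflection
\[\tau(x,y_1,\dots,y_{n-1})=(-x,y_1,\dots,y_{n-1}).\]
Such coordinates exist by the usual linearization argument for a compact group action near a fixed point: take any coordinates adapted to $\partial N$, then average the resulting chart over $\Z_2$ to make $\tau$ into its linear part, which must be a reflection since the fixed set has codimension one.

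In these coordinates, $\Z_2$-equivariance of $\Phi$ becomes the functional identity
\[\Phi(-x,y_1,\dots,y_{n-1})=\Phi(x,y_1,\dots,y_{n-1}).\]
Differentiating once in $x$ yields $-\Phi_x(-x,y)=\Phi_x(x,y)$, so setting $x=0$ gives $\Phi_x(0,y)\equiv 0$ as a function of $y$ in a neighborhood of the origin.

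Next I would differentiate the identity $\Phi_x(0,y)\equiv 0$ with respect to each $y_j$. This gives $\Phi_{xy_j}(0,y)\equiv 0$, and in particular, evaluating at $y=0$, i.e.\ at $p$, one obtains
\[\frac{\partial^2\Phi}{\partial x\,\partial y_j}(p)=0\qquad\text{for all }j=1,\dots,n-1.\]
These are precisely the off-diagonal entries of $D^2\Phi(p)$ coupling the $x$-direction with the $y$-directions. Hence the Hessian decomposes as the direct sum of the $(n-1)\times(n-1)$ block $A=\bigl(\partial^2\Phi/\partial y_i\partial y_j(p)\bigr)$ and the $1\times 1$ block $B=\bigl(\partial^2\Phi/\partial x^2(p)\bigr)$, as claimed.

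There is essentially no obstacle; the only mild point is the existence of the reflection-type coordinates, but this is a standard application of the equivariant tubular neighborhood (or simply averaging over $\Z_2$) combined with the fact that the fixed locus has codimension one, so that $d\tau_p$ acts as $-1$ on the normal line to $\partial N$ and as $+1$ on $T_p\partial N$.
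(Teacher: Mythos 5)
Your proof is correct and follows essentially the same route as the paper: both work in coordinates where $\tau$ acts as $(x,y)\mapsto(-x,y)$ and observe that the mixed partial $\frac{\partial^2\Phi}{\partial x\,\partial y_j}$ is anti-invariant under $\tau$, hence vanishes at the fixed point $p$. The only addition is your explicit justification (via averaging/linearization) that such reflection coordinates exist, which the paper takes for granted.
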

\begin{proof}
  The statement is equivalent to $\frac{\partial^2\Phi}{\partial x\partial y_i}(p)=0$ for all $i=1,\dots,n-1$.
  To see this, consider the function $H_i=\frac{\partial^2\Phi}{\partial x\partial y_i}$. As $\Phi$ is $\tau$-invariant and
  $\tau x=-x$, while $\tau y_i=y_i$, we note that $H_i(\tau z)=-H_i(z)$.
  Now  $p$ is fixed by $\tau$, so  $H_i(p)=0$ as desired.
\end{proof}
Let $\cF$ be the space of all smooth equivariant functions from $M$ to $\R$. 
Inside $\cF$ we will specify several subspaces. Our  convention is that the superscript  refers to codimension of
a given condition. The tilde over a subspace indicates that the space is given by a condition, but the relevant space
will be defined as a relatively open set in the tilded space.
\begin{itemize}
  \item The space $\cF^0$ is the subspace of functions $\Phi\in\cF$ satisfying $\det D^2\Phi(p)\neq 0$ for any point $p\in M$ such that $D\Phi(p)=0$;
  \item The space $\wt{\cF}^1$ is the subspace of $\Phi\in\cF$ satisfying 
    $\det D^2\Phi(p)\neq 0$ for all but one $\Z_2$-orbit of critical points
    however, there exists a single orbit $p_0,\tau p_0\in M$ with $D\Phi(p_0)=0$ such that $\dim \ker D^2\Phi(p_0)=1$;
  \item The space $\cF^{\ge 2}_{1}$ is the space of $\Phi\in\cF$ such that 
    $\det D^2\Phi(p)=0$ for more than $1$ orbit of critical points  of $\Phi$;
  \item The space $\cF^{\ge 2}_{2}$ is the space of $\Phi\in\cF$ such that there exists an orbit of critical points $p,\tau p$ with $D\Phi(p)=0$ and $\dim\ker D^2\Phi(p)>1$.
  \end{itemize}
Note that the spaces $\cF^{\ge 2}_1$ and $\cF^{\ge 2}_{2}$ are not mutually disjoint. 

We decompose the space $\wt{\cF}^1$ into two subspaces
\begin{itemize}
\item $\wt{\cF}^1_{1}$ --- when the point $p_0$ is the interior point of $N$
\item $\wt{\cF}^1_2$ --- when $p_0$ is on the boudary.
\end{itemize}
We describe $\wt{\cF}^1_1$ in more detail.
Let us fix  a vector $v\in T_{p_0}M$ such that $v\in\ker D^2\Phi(p_0)$. We declare
\begin{itemize}
  \item $\Phi\in \cF^1_1$ if $D^3\Phi(p_0)$ evaluated at $(v,v,v)$ is non-zero;
  \item $\Phi\in \cF^{\ge 2}_3$ if $D^3\Phi(p_0)$ evaluated at $(v,v,v)$ is zero.
\end{itemize}

If $p_0\in\partial N$, we need to specify the position of the vector $v$ generating $\ker D^2\Phi(p_0)$ with respect
to the $\tau$-action.
\begin{lemma}\label{lem:dimker1}
  Suppose $\Phi\colon M\to\R$ is an equivariant function such that  $\dim\ker D^2\Phi(p)=1$ for
  a point  $p\in\partial N=M^{\Z_2}$. Let $v\in T_pM$ be a vector
  spanning $\ker D^2\Phi(p)$. Then either $\tau v=v$ or $\tau v=-v$.
\end{lemma}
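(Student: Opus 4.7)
The plan is to read off the statement directly from the block decomposition provided by Lemma~\ref{lem:block}. Since $p\in M^{\Z_2}$, there exist local coordinates $(x,y_1,\dots,y_{n-1})$ near $p$ with $\tau(x,y_1,\dots,y_{n-1})=(-x,y_1,\dots,y_{n-1})$. The differential $d\tau_p$ acts on $T_pM$ as $-1$ on the line $L:=\R\cdot\partial_x$ and as $+1$ on the hyperplane $H:=T_p\partial N=\mathrm{span}(\partial_{y_1},\dots,\partial_{y_{n-1}})$, so $T_pM=L\oplus H$ is the $(-1,+1)$-eigenspace decomposition.

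By Lemma~\ref{lem:block}, in these coordinates $D^2\Phi(p)$ is block diagonal, $D^2\Phi(p)=B\oplus A$, where $B=\bigl(\tfrac{\partial^2\Phi}{\partial x^2}(p)\bigr)$ acts on $L$ and $A$ acts on $H$. Consequently
\[\ker D^2\Phi(p)=\ker B\,\oplus\,\ker A.\]
The assumption $\dim\ker D^2\Phi(p)=1$ therefore forces exactly one of the summands to be one-dimensional while the other vanishes.

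In the first case, $\ker B=L$ (that is, $\tfrac{\partial^2\Phi}{\partial x^2}(p)=0$) and $\ker A=0$. Then $\ker D^2\Phi(p)=L$, so $v$ is a nonzero multiple of $\partial_x$ and $\tau v=-v$. In the second case, $B$ is invertible and $\dim\ker A=1$; then $\ker D^2\Phi(p)\subset H$, so $v\in H$ and $\tau v=v$. In either case $\tau v=\pm v$, as claimed. The only substantive input is Lemma~\ref{lem:block}, so there is no real obstacle; the argument is purely linear algebra applied to the eigenspace decomposition of the involution $d\tau_p$.
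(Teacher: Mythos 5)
Your proposal is correct and follows essentially the same route as the paper: both invoke Lemma~\ref{lem:block} to get the $\tau$-compatible block decomposition of $D^2\Phi(p)$, observe that the kernel splits as the direct sum of the kernels of the two blocks, and conclude from $\dim\ker D^2\Phi(p)=1$ that the kernel lies entirely in one eigenspace of $d\tau_p$. No gaps; the argument matches the paper's.
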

\begin{proof}
  By Lemma~\ref{lem:block}, there is a decomposition $T_pM=T_p M^{\Z_2}\oplus V$
  with a one-dimensional $V$, such that $D^2\Phi(p)$  has a compatible block structure.
  Here, $\tau$ fixes $T_p M^{\Z_2}$ and acts by multiplication by $-1$ on $V$.

  Write $A$ and $B$ for the two blocks. The dimension of the kernel of a block sum of matrices is the sum
  of dimensions of the kernels of the summands, so either  $\dim\ker A=1$ and $\dim\ker B=0$,
  or vice versa. The first possibility means that $\ker D^2\Phi(p)\subset T_pM^{\Z_2}$, so with $v$ spanning this kernel,
  $\tau v=v$. The other
  possibility is that $\ker D^2\Phi(p)\subset V$, leading to $\tau v=-v$.
\end{proof}
Lemma~\ref{lem:dimker1} allows us to specify two subspaces of $\wt{\cF}^1_2$. We keep using the notation $v$ for a vector generating $\ker D^2\Phi(p_0)$.
\begin{itemize}
  \item The space $\wt{\cF}^1_{2,1}$ is the space of functions in $\wt{\cF}^1_2$ such $p_0\in M^{\Z_2}$ and $\tau v=v$;
  \item The space $\wt{\cF}^1_{2,2}$ is the space of functions in $\wt{\cF}^1_2$ such that $p_0\in M^{\Z_2}$ and $\tau v=-v$.
\end{itemize}
The spaces $\wt{\cF}^1_{2,1}$ and $\wt{\cF}^1_{2,2}$ describe different kinds of degeneracies of functions. 
We set $\wt{\cF}^1_{2,1}=\cF^1_{2,1}\cup \cF^2_3$, where
\begin{itemize}
  \item The space $\cF^1_{2,1}$ is defined by the condition  $D^3\Phi(p_0)(v,v,v) \neq 0$.
  \item The space $\cF^{\ge 2}_3$ is defined by the condition that $D^3\Phi(p_0)(v,v,v)=0$.
\end{itemize}
In local coordinates $(y_1,\dots,y_{n-1},x)$, if $D^2\Phi(p)$ is diagonal and $\frac{\partial^2 \Phi}{\partial y_{n-1}^2}(p_0)=0$,
the two spaces are distinguished by vanishing or not of $\frac{\partial^3 \Phi}{\partial y_{n-1}^3}(p_0)$.

While it is tempting to copy these conditions to the case $\wt{\cF}^1_{2,2}$, we hit the following problem:
by equivariance, $\frac{\partial^3\Phi}{\partial x^3}(p_0)=0$.
Merely studying $\frac{\partial^4 \Phi}{\partial x^4}(p_0)$ is not enough: in the presence of
terms like $\frac{\partial^3\Phi}{\partial x^2\partial y_i}$, the fourth derivative might
depend on the choice of coordinates. Therefore, we make the following definition.
\begin{definition}\label{def:deg_quad}
  Suppose $\Phi\in \wt{\cF}^1_{2,2}$ and $p_0$ is the critical point with $\det D^2\Phi(p_0)=0$.
  The \emph{quasi-homogeneous quadratic form}  $B\Phi(p_0)$ of $\Phi$
  is given by 
	\[B\Phi(a,b_1,\dots,b_{n-1})=\alpha a^2 + \sum_{i=1}^{n-1}\alpha_i ab_i+\sum_{i=1}^{n-1}\alpha_{ij} b_ib_j,\]
  where $\alpha=\frac{\partial^4\Phi}{\partial x^4}(p_0)$, $\alpha_i=\frac{\partial^3\Phi}{\partial x^2\partial y_i}(p_0)$,
  $\alpha_{ij}=\frac{\partial^2\Phi}{\partial y_i\partial y_j}(p_0)$.
\end{definition}
\begin{lemma}\label{lem:degen}
  If $\Phi\in\wt{\cF}^1_{2,2}$, then vanishing of  $\det(B\Phi)$  is independent of coordinate changes.
\end{lemma}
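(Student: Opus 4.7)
The plan is to identify $B\Phi$ with the Hessian of a reduced function on a quotient, and then invoke the tensorial transformation of the ordinary Hessian at a critical point. Since $\Phi$ is $\Z_2$-invariant and $p_0\in M^{\Z_2}$, the equivariant Hadamard lemma applied in the variable $x$ lets me write $\Phi(x,y)=\Psi(x^2,y)$ locally near $p_0$ for a smooth function $\Psi(u,y_1,\dots,y_{n-1})$. The hypothesis $\Phi\in\wt{\cF}^1_{2,2}$ translates into the statement that $(u,y)=(0,0)$ is a critical point of $\Psi$ whose Hessian $H\Psi(0,0)$ has nondegenerate $y$-block: indeed $\Phi_{xx}(p_0)=2\Psi_u(0,0)=0$ and the remaining kernel direction of $D^2\Phi(p_0)$ lies along the $x$-axis exactly when this holds.

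A direct chain-rule calculation applied to $\Phi=\Psi(x^2,y)$ and evaluated at $x=0$ yields
\[
\alpha = 12\,\Psi_{uu}(0,0),\qquad \alpha_i = 2\,\Psi_{uy_i}(0,0),\qquad \alpha_{ij} = \Psi_{y_iy_j}(0,0).
\]
Hence the symmetric matrix representing the quadratic form $B\Phi$ agrees with the Hessian $H\Psi(0,0)$ after an invertible diagonal rescaling of the $u$-row and column, and in particular $\det B\Phi=0$ if and only if $\det H\Psi(0,0)=0$. The problem therefore reduces to showing that the latter condition is coordinate-free.

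For this, note that any equivariant local diffeomorphism fixing $p_0$ must, by $\tau$-equivariance (the $x$-component is odd in $x$ and the $y$-components are even in $x$), have the form $(x,y)\mapsto(X,Y)=(x\,h(x^2,y),\,k(x^2,y))$ for smooth $h,k$ with $h(0,0)\neq 0$ and $\partial k/\partial y(0,0)$ invertible, both conditions being forced by invertibility of the Jacobian at $p_0$. Setting $u=x^2$ and $U=X^2$, this descends to an honest smooth change of variables $(u,y)\mapsto(U,Y)=(u\,h(u,y)^2,\,k(u,y))$ whose Jacobian $J$ at the origin is lower block-triangular with diagonal blocks $h(0,0)^2$ and $\partial k/\partial y(0,0)$, hence invertible. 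Pulling $\Psi$ back by this map leaves $(0,0)$ critical, so the Hessian transforms tensorially as $H\Psi\mapsto J^T\,H\Psi\,J$ and $\det H\Psi$ is multiplied by the nonzero factor $(\det J)^2$; thus its vanishing is preserved. The main technical point is precisely this correspondence between equivariant diffeomorphisms in the $(x,y)$-coordinates and smooth diffeomorphisms on the $(u,y)$-side with the described block-triangular Jacobian, and once it is in place the lemma reduces to the classical coordinate-invariance of the Hessian determinant at a critical point.
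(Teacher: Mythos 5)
Your route is genuinely different from the paper's. The paper assigns weights ($1$ to $x$, $2$ to each $y_i$), treats $B\Phi$ as a quadratic form in $(y_1,\dots,y_{n-1},x^2)$ with matrix $\Gamma$, and verifies by hand that an equivariant coordinate change acts on $\Gamma$ by congruence with an invertible matrix $E$. You instead factor $\Phi=\Psi(x^2,y)$ through the quotient (Whitney's even/odd-function theorem) and reduce everything to the classical coordinate-invariance of $\det H\Psi$ at a critical point of the reduced function $\Psi$. The structural point you isolate --- that equivariant diffeomorphisms $(x,y)\mapsto(xh(x^2,y),k(x^2,y))$ descend to local diffeomorphisms in $(u,y)=(x^2,y)$ with invertible block-triangular Jacobian --- is exactly the content of the paper's matrix $E$, so the two arguments run in parallel; yours is the more conceptual one and makes transparent why $x^2$ behaves like an honest coordinate.

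One step fails as written, and it points at a normalization problem in Definition~\ref{def:deg_quad} itself. The matrix with entries $\alpha=12\Psi_{uu}$, $\alpha_i=2\Psi_{uy_i}$, $\alpha_{ij}=\Psi_{y_iy_j}$ is \emph{not} a diagonal rescaling of $H\Psi(0,0)$: rescaling the $u$-row and column by $c$ sends $\Psi_{uy_i}\mapsto c\Psi_{uy_i}$ and $\Psi_{uu}\mapsto c^2\Psi_{uu}$, so matching the cross terms forces $c=2$ and produces $4\Psi_{uu}$, not $12\Psi_{uu}$. Since the $y$-block $A=(\alpha_{ij})$ is invertible for $\Phi\in\wt{\cF}^1_{2,2}$, the Schur complement gives $\det\Gamma=0$ if and only if $\alpha=v^TA^{-1}v$ (with $v=(\alpha_i)$), while $\det H\Psi(0,0)=0$ if and only if $\alpha=3v^TA^{-1}v$; these conditions genuinely differ whenever some $\alpha_i\neq 0$. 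The coordinate-independent condition is yours: for $\Phi=y^2+x^2y+cx^4$ the equivariant substitution $y\mapsto y-\tfrac12x^2$ gives $y^2+(c-\tfrac14)x^4$, and $\det H\Psi=0$ selects $c=\tfrac14$ in both charts, whereas $\det\Gamma=0$ selects $c=\tfrac1{12}$ before and $c=\tfrac14$ after. The invariant object is the lowest weighted-degree part of the Taylor expansion of $\Phi$, which equals $\tfrac12$ times the Hessian form of $\Psi$ in the variables $(y,x^2)$; the coefficients in Definition~\ref{def:deg_quad} are raw derivatives rather than Taylor coefficients, which is what creates the mismatch. So your argument proves the intended statement, but you must replace the ``diagonal rescaling'' sentence by the correct identification of $B\Phi$ (normalized by Taylor coefficients) with $H\Psi(0,0)$, or simply take $\det H\Psi(0,0)$ as the definition of the invariant.
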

\begin{proof}
	The statement and the proof are similar to the well-known fact that the  quadratic form
	$D^2\Phi(p)$ is well-behaved under the change of local coordinates as long as $D\Phi(p)=0$. 
	For the  proof, let us assign weight $2$ to $y_i$ and weight $1$  to $x$. If $\Phi\in\wt{\cF}^1_{2,2}$,
	we see that $B\Phi(x,y_1,\dots,y_{n-1})$ is the least degree term of the Taylor
	expansion of $\Phi$. 
	Define the matrix $\Gamma=\{\gamma_{ij}\}_{i,j=1}^n$ with
	$\gamma_{ij}=\alpha_{ij}$ if $i,j\le n-1$, $\gamma_{in}=\gamma_{ni}=\alpha_i$,
	$\gamma_{nn}=\alpha$. With $\vec{y}=(y_1,\dots,y_{n-1},x^2)$, we have
	\[B\Phi(x,y_1,\dots,y_{n-1})=\vec{y}\Gamma\vec{y}^T.\]

	Suppose $(\wt{x},\wt{y}_1,\dots,\wt{y}_{n-1})$ are other coordinates
	transformed equivariantly by $y_i=\sum a_{ij}\wt{y}_j+a_j\wt{x}^2+\dots$,
	$x=a\wt{x}+\dots$ with higher order terms meaning higher order terms
	with respect to the weighted degree. Define $C$ as the $n\times n$ matrix of the derivatives
	of the change of variables, that is $C=\{c_{ij}\}$, where $c_{ij}=a_{ij}$
        if $i,j\le n-1$, $c_{nj}=0$, $c_{jn}=a_j$ for $j\le n-1$, $c_{nn}=a$.
	Also, define $E$ to be the matrix as $C$, but with $a^2$ on the $(n,n)$-place.
        We write $C'$ for the $(n-1)\times (n-1)$ minor of $C$ obtained by deleting the last row and the last column.
        It is straightforward to see that $\det C=a\det C'$, $\det E=c^2\det C'$
	so $C$ is non-degenerate if and only if $E$ is.

	It can be verified that
	\[B\Phi(\wt{x},\wt{y}_1,\dots,\wt{y}_{n-1})=\vec{\wt{y}}E\Gamma E^T\vec{\wt{y}}^T,\]
	where $\vec{\wt{y}}=(\wt{y}_1,\dots,\wt{y}_{n-1},\wt{x}^2)$.
\end{proof}
Given Lemma~\ref{lem:degen}, we specify two subspaces of $\wt{\cF}^1_{2,2}$:
\begin{itemize}
  \item The space $\cF^1_{2,2}$ is defined by $\Phi\in\wt{\cF}^1_{2,2}$ for which $B\Phi$ is non-degenerate;
  \item The space $\cF^{\ge 2}_5$ is defined by $\Phi\in\wt{\cF}^1_{2,2}$ for which $B\Phi$ has non-trivial kernel.
\end{itemize}
\subsection{Counting dimensions}\label{sub:counting}
We are now in position to apply Equivariant Transversality Theorem~\ref{thm:equiv} to restrict potential singularities
of a function. To state the result, we introduce one more quantity.

\begin{lemma}\label{lem:parameter_count}
  Suppose $W$ is a submanifold in the jet space $\mI_G^{k,s}(M;\R)$.
  Suppose $\cF_W$ is the subset of equivariant functions whose degree $k$ multijet extension misses $W$. 
  \begin{itemize}
    \item[(a)] If $\codim W>s\dim M$, then $\cF_W$ is residual in $\cF$;
    \item[(b)] If $\codim W=1+s\dim M$, and $\Phi_\sigma$, $\sigma\in[0,1]$, is such that $\Phi_0,\Phi_1$ miss $\cF_W$,
      then $\Phi_\sigma$ can be perturbed rel $0,1$ to a path that hits $\cF_W$ transversally finitely many times;
    \item[(c)] If $\codim W>1+s\dim M$, and $\Phi_\sigma$, $\sigma\in[0,1]$, is such that $\Phi_0,\Phi_1$ miss $\cF_W$,
      then $\Phi_\sigma$ can be perturbed rel $0,1$ to a path that misses $\cF_W$.
  \end{itemize}
\end{lemma}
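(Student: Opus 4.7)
The plan is to invoke Equivariant Transversality Theorem~\ref{thm:equiv} and perform a dimension count. I will treat part (a) as the static case and parts (b), (c) as the parametric case.

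\emph{Static case (a).} Apply Theorem~\ref{thm:equiv} directly to $W \subset \mI_G^{k,s}(M;\R)$. This yields a residual set of $\Phi \in C^\infty_G(M;\R)$ for which the multijet extension $\mj^{k,s}\Phi\colon M^{(s)} \to \mI_G^{k,s}(M;\R)$ is transverse to $W$. For any such $\Phi$, the preimage $(\mj^{k,s}\Phi)^{-1}(W)$ is a submanifold of $M^{(s)}$ of dimension $s\dim M - \codim W$. Under the hypothesis $\codim W > s\dim M$, this dimension is negative, so the preimage is empty, i.e.\ $\Phi \in \cF_W$. Hence $\cF_W$ is residual.

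\emph{Parametric case (b), (c).} A path $\Phi_\sigma$ of $\Z_2$-equivariant functions on $M$ is the same datum as an element of $C^\infty_G(M \times [0,1];\R)$ with $\Z_2$ acting trivially on the interval. Form the combined evaluation map
\[
E_\Phi\colon M^{(s)} \times [0,1] \longrightarrow \mI_G^{k,s}(M;\R),\qquad E_\Phi(\vec p,\sigma) = \mj^{k,s}\Phi_\sigma(\vec p).
\]
The argument behind Theorem~\ref{thm:equiv} adapts to produce a residual set of paths along which $E_\Phi \pitchfork W$; moreover, because the endpoints $\Phi_0,\Phi_1$ already miss $W$ and $W$ is locally closed, the perturbation can be cut off in $\sigma$ using a bump function supported away from $\{0,1\}$, giving a perturbation rel endpoints. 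Once $E_\Phi \pitchfork W$, the preimage $E_\Phi^{-1}(W)$ is a submanifold of $M^{(s)} \times [0,1]$ of dimension $s\dim M + 1 - \codim W$.

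Under hypothesis (b) this dimension equals $0$; compactness of $M^{(s)} \times [0,1]$ (recall $M$ is closed) forces the preimage to be finite, and projecting to $[0,1]$ produces finitely many parameters $\sigma_i \in (0,1)$ where the path crosses $W$ transversely. Under hypothesis (c) the virtual dimension is negative, so the preimage is empty and $\Phi_\sigma$ misses $W$ for every $\sigma$.

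The only genuine technical point is the relative version of equivariant transversality with fixed endpoints. It requires revisiting the proof of Theorem~\ref{thm:equiv} just enough to check that the subspace of perturbations vanishing near $\sigma \in \{0,1\}$ remains rich enough to achieve transversality — a standard bump-function argument, but one that must be carried out explicitly in the $\Z_2$-equivariant setting because one cannot perturb arbitrarily on the fixed locus $M^{\Z_2}$. Once this is in place, all remaining content is mechanical dimension counting.
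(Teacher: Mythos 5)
Your proposal is correct and follows essentially the same route as the paper: part (a) is the identical dimension count, and for (b)--(c) the paper formalizes your evaluation map $E_\Phi$ by passing to $\wt{M}=M\times[0,1]$, pulling $W$ back to a subset $\wt{W}_0\subset\mI_G^{k,s}(\wt{M};\R)$ cut out by the extra conditions $\sigma_1=\dots=\sigma_s$, and applying Theorem~\ref{thm:equiv} there --- which is exactly the device that makes your phrase ``the argument behind Theorem~\ref{thm:equiv} adapts'' precise. Both you and the paper leave the rel-endpoints cutoff at the level of a sketch; the one small inaccuracy on your side is the appeal to compactness of $M^{(s)}\times[0,1]$ in case (b), since $M^{(s)}$ is an open subset of $M^s$ and hence non-compact for $s>1$ (harmless here, as the finiteness statement is only needed for $s=1$).
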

\begin{proof}
  To prove (a), it is enough to show that if $\Phi\in\cF$ is such that $\mj^{k,s}\Phi$ is transverse to $W$
  at a point $\mj^{k,s}\Phi(z_1,\dots,z_s)$,
  then $\mj^{k,s}\Phi(z_1,\dots,z_s)$ misses $W$. The domain of the map $\mj^{k,s}$ is $M_0^s$, which is
  of dimension $s\dim M$.
  Hence, by transversality, the image of $\mj^{k,s}$ misses $W$.

  An analogous parameter-counting argument works for (b) and (c), however 
  a formal proof of (b) and (c) requires a technical step. It is well-known to the experts, so we provide only a sketch.
  Let $\wt{M}=M\times[0,1]$ with the $G$-action induced from
  the action on $M$ and a trivial action on the interval. We define $\wt{M}^s_0$ to be the set of pairs
  of points $(z_1,\sigma_1),\dots,(z_s,\sigma_s)$ such that $z_i\neq z_j$ if $i\neq j$, but $\sigma_1=\dots=\sigma_s$.
  Then, $\dim\wt{M}=1+s\dim M$. 

  The space $\mI_G^{k,s}(\wt{M};\R)$ fibers over $\mI_G^{k,s}(M;\R)$. This can be seen locally: take the Taylor expansion
  of a function on $M\times[0,1]$ and set all terms corresponding to $[0,1]$-variable to $0$.
  Let $\wt{W}$ be the preimage of $W$ under the map.
  It is a codimension $\codim W$ submanifold of $\mI_G^{k,s}(\wt{M};\R)$.
  Define $\wt{W}_0=\wt{W}\cap\ma^{-1}(\wt{M}^s_0)$.
  Now, $\codim \wt{W}_0=s-1+\codim W$, because equality of the $\sigma_i$ coordinates gives
  $s-1$ conditions.

  Given this notation, a family $\Phi_\sigma$ as in items (b) and (c) induces a map $\wt{\Phi}\colon\wt{M}\to\R$
  via $\wt{\Phi}(z,\sigma)=\Phi_\sigma(z)$. This map has its multijet extension
  \[\mj^{k,s}_G \wt{\Phi}\colon \wt{M}^s\to \mI_G^{k,s}(\wt{M};\R).\]
  Its image is of dimension $\dim\wt{M}^s=s+s\dim M$. If $s-1+\codim W=s+s\dim M$,
  then $\mj^{k,s}_G \wt{\Phi}$ intersects
  $\wt{W}_0$ at finitely many times, each such intersection corresponds to
  a parameter $\sigma$ for which $\mj^{k,s}\Phi_\sigma$ hits $W$.

  If $\dim\wt{M}^s<\codim\wt{W}_0$, then a residual set of maps $\wt{\Phi}\colon\wt{M}\to\R$ has
  multijet extension missing $\wt{W}_0$. This
  implies that $\Phi_\sigma$ can be perturbed rel boundary to a path that misses $W$.
\end{proof}

As a corollary, we have the following result.
\begin{proposition}\label{prop:generic}\
  \begin{itemize}
    \item[(a)] The set of equivariant maps in $\cF^0$ is residual;
    \item[(b)] Any path $\Phi_\sigma$, $\sigma\in[0,1]$ such that $\Phi_0,\Phi_1\in\cF^0$ can be perturbed to a path that
      \begin{itemize}
	\item $\Phi_\sigma\in\cF^0$ for all but finitely many $\sigma$,
	\item  $\Phi_\sigma\in\cF^1_1\cup\cF^1_{2,1}\cup\cF^1_{2,2}$ for finitely many $\sigma$,
	\item $\Phi_\sigma$ misses $\cF^{\ge 2}_1\cup\cF^{\ge 2}_2\cup\cF^{\ge 2}_3\cup\cF^{\ge 2}_4\cup\cF^{\ge 2}_5$.
      \end{itemize}
  \end{itemize}
\end{proposition}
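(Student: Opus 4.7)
The plan is to apply the Equivariant Transversality Theorem~\ref{thm:equiv} via Lemma~\ref{lem:parameter_count}. For each stratum defined in Subsection~\ref{sub:decomp} I would realize it as the set of functions whose multijet extension misses (or, for the codimension-one events, hits transversely) a stratified submanifold $W$ in an appropriate equivariant multijet space $\mI_G^{k,s}(M;\R)$, and then compare $\codim W$ to $s\dim M$ or $s\dim M + 1$ to invoke the corresponding conclusion of Lemma~\ref{lem:parameter_count}.

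For part (a), I would take $s=1$, $k=2$, and let $W \subset \mI_G^{2,1}(M;\R)$ be the locus where $D\Phi(p)=0$ and $\det D^2\Phi(p)=0$. At an interior (free) point this gives $n+1$ independent conditions. At a boundary fixed point $p\in M^{\Z_2}$, equivariance forces $\partial_x\Phi(p)=0$ automatically, so $D\Phi(p)=0$ contributes only $n-1$ conditions, but $M^{\Z_2}$ is of codimension~$1$ in $M$ and the equivariant fiber is correspondingly smaller; the total codimension of $W$ along the boundary stratum is again $n+1$. Since $n+1 > n = s\dim M$, Lemma~\ref{lem:parameter_count}(a) yields residuality of $\cF^0$.

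For part (b) I would treat the codimension-one strata $\cF^1_1, \cF^1_{2,1}, \cF^1_{2,2}$ first. Each is cut out in $\mI_G^{k,1}(M;\R)$, with $k=3,3,4$ respectively, by the two-jet conditions defining $\wt{\cF}^1$ together with an openness condition (non-vanishing of a third or fourth derivative). A count parallel to the boundary computation above shows that each is a codimension-$(n+1)$ sublocus, so Lemma~\ref{lem:parameter_count}(b) produces finitely many transverse crossings after generic perturbation. Then, I would verify that the bad strata have strictly larger codimension: $\codim \cF^{\ge 2}_1 \ge 2(n+1) > 2n+1 = s\dim M + 1$ with $s=2$; $\codim \cF^{\ge 2}_2 \ge n+3$, because a symmetric matrix with kernel of dimension $\ge 2$ sits in a subvariety of codimension $\binom{3}{2}=3$ in the space of symmetric matrices; $\codim \cF^{\ge 2}_3, \codim \cF^{\ge 2}_4 \ge n+2$, because the vanishing of $D^3\Phi(v,v,v)$ is one additional condition on the critical-point locus; and $\codim \cF^{\ge 2}_5 \ge n+2$, since imposing $\det B\Phi = 0$ adds one further condition to the codim-$(n+1)$ boundary locus where the Hessian's $x$-block vanishes. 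In every case, Lemma~\ref{lem:parameter_count}(c) allows us to perturb $\Phi_\sigma$ rel endpoints to miss the stratum.

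The main obstacle is the bookkeeping of codimensions at boundary fixed points. There, the equivariant jet bundle restricts to a proper subbundle of the full jet bundle, and one must check that each stratum is cut out by \emph{equivariant} defining conditions in this proper subbundle, giving a smooth sublocus of the expected codimension rather than a larger one. This is subtle for $\cF^{\ge 2}_5$ in particular, where the natural degeneracy invariant is the quasi-homogeneous form $B\Phi$ of Definition~\ref{def:deg_quad} and not the ordinary Hessian. Lemma~\ref{lem:degen}, together with the weighted change-of-coordinates argument used in its proof, ensures that $\det B\Phi = 0$ is a coordinate-free, codimension-one condition on the equivariant $4$-jet at $p$, which is precisely what makes the comparison $n+2 > n+1$ legitimate.
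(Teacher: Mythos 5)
Your proposal is correct and follows essentially the same route as the paper: a parameter-counting argument via Lemma~\ref{lem:parameter_count}, realizing each stratum as a submanifold of the equivariant multijet space and computing its codimension, with the key observation that at fixed points the condition $\partial_x\Phi(p_0)=0$ is automatic and so does not raise the codimension. You in fact supply more detail than the paper (which dismisses the higher-codimension strata as ``analogous''), and your codimension bounds for the $\cF^{\ge 2}_i$ all clear the threshold $1+s\dim M$ needed for Lemma~\ref{lem:parameter_count}(c), so the argument goes through.
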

\begin{proof}
  The proof relies on a parameter counting argument and Lemma~\ref{lem:parameter_count}.
  
  First note that the complement of $\cF^0$ is the sum of spaces
  $\wt{\cF}^1_1\cup\wt{\cF}^1_2\cup\cF^{\ge 2}_1\cup\cF^{\ge 2}_2$.
  A case-by-case analysis reveals that $\wt{\cF}^1\cup\wt{\cF}^1_2$ are defined by a codimension $1$ conditions, while
  $\cF^{\ge 2}_1,\cF^{\ge 2}_2$ are defined by codimension $2$ conditions. 

  For $\wt{\cF}^1_1$ this is done as follows. Choose a point $p_0\notin M^{\Z_2}$ and local coordinates
  $y_1,\dots,y_n$   near it. The conditions defining $\wt{\cF}^1_1$ at that point (that is the intersection
  of the manifold $W\subset I^{2,1}_G(M;\R)$ with $\mb^{-1}(p_0)$) are
  \begin{itemize}
    \item $\frac{\partial\Phi}{\partial y_i}(p_0)=0$ for $i=1,\dots,n$;
    \item $\det D^2\Phi(p_0)=0$.
  \end{itemize}
  Therefore, the manifold $W\subset I^{2,1}_G(M;\R)$ is of codimension $n+1$.

  For $\wt{\cF}^1_2$, we choose $p_0\in M^{\Z_2}$ and local coordinates $(x,y_1,\dots,y_{n-1})$
  where $\tau x=-x$ and $\tau y_i=y_i$.
  Then the  conditions at $p_0$ defining $\wt{\cF}^1_2$ are
  \begin{itemize}
    \item $\frac{\partial\Phi}{\partial y_i}(p_0)=0$ for $i=1,\dots,n-1$,
    \item $\det D^2\Phi(p_0)=0$,
    \item $p_0\in \partial M$.
  \end{itemize}
  This means that the subspace $W\subset \mI^{2,1}_G(M;\R)$ defining 
  $\wt{\cF}^1_2$ fibers via $\mb$ over $M^{\Z_2}$ with fiber of codimension $n$. Hence, it is of codimension $n+1$.

  Note that in the latter case $\frac{\partial\Phi}{\partial x}(p_0)$ is satisfied automatically and
  does not increase the codimenision of the  subspace.

  The parameter counting for other subspaces is analogous.
\end{proof}

\subsection{Local forms for the degenerations}\label{sub:local}
We provide a normal form of a function in $\cF^1_1,\cF^1_{2,1}$, and $\cF^1_{2,2}$.
The proofs are classical, however we present sketches of proofs to assert that
the changes of variables can be made equivariant.

We begin with a general lemma.
It can be found in \cite[proof of Lemma 2.1]{Milnor_morse}, as
a step in proving
Morse Lemma. 
\begin{lemma}\label{lem:type_gen}
  Suppose $H$ is a smooth function in variables $u_1,\dots,u_n$. Suppose  $DH(p)=0$ and $D^2H(p)$ is diagonal
  and the term $c=\frac{\partial^2 H}{\partial u_1^2}(p)\neq 0$. Then, there exists a change of variables
  $(u_1,\dots,u_n)\to (w_1,u_2,\dots,u_n)$, local near $p$, such that
  \[H(u_1,\dots,u_n)=c w_1^2+\sum_{2\le j\le k\le n}u_ju_kH'_{jk}(w,u_2,\dots,u_n).\]
  Moreover, if $\tau$ acts on $\R^n$ via $\tau u_i=\epsilon_iu_i$, and $H$ is $\tau$-invariant,
  then the change of variables can be made $\tau$-equivariant.
\end{lemma}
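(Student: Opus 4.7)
The plan is to apply Hadamard's lemma twice, isolate the quadratic term in $u_1$, and complete the square; the only addition over the classical argument is verifying that the resulting change of variables respects the $\tau$-action. Translating so that $p=0$ and $H(0)=0$, two applications of Hadamard's lemma (using $DH(0)=0$) produce smooth coefficients $h_{ij}(u)$, $1\le i\le j\le n$, with $H(u)=\sum_{i\le j}u_iu_j\,h_{ij}(u)$, and each $h_{ij}(0)$ proportional to the $(i,j)$-entry of $D^2H(0)$. By the diagonality hypothesis, $h_{ij}(0)=0$ for $i\ne j$ while $h_{11}(0)\ne 0$, so $h_{11}$ is nonvanishing in a neighborhood of $0$.

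Splitting off the $u_1$-dependence gives $H(u)=h_{11}(u)\,u_1^2+u_1A(u)+B(u)$, where $A(u)=\sum_{k\ge 2}u_k h_{1k}(u)$ and $B(u)=\sum_{2\le j\le k}u_ju_k h_{jk}(u)$. Completing the square, one sets
\[
w_1 := \sqrt{h_{11}(u)/c}\,\Bigl(u_1+\tfrac{A(u)}{2h_{11}(u)}\Bigr),
\]
the square root being well defined near $0$ since $h_{11}(0)/c>0$. A direct calculation gives $cw_1^2=h_{11}(u_1+A/(2h_{11}))^2$, so
\[
H(u) = cw_1^2+B(u)-\tfrac{A(u)^2}{4h_{11}(u)}.
\]
Since $\partial w_1/\partial u_1|_0\ne 0$, the map $(u_1,\dots,u_n)\mapsto(w_1,u_2,\dots,u_n)$ is a local diffeomorphism; expressing $u_1$ smoothly in $(w_1,u_2,\dots,u_n)$, the residual $B-A^2/(4h_{11})$ is manifestly of the form $\sum_{2\le j\le k}u_ju_kH'_{jk}(w_1,u_2,\dots,u_n)$ with smooth coefficients $H'_{jk}$.

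For the equivariant refinement, I would exploit that Hadamard's lemma is $\tau$-natural, because the auxiliary integration variable $t\in[0,1]$ carries the trivial action. Concretely, the formula $h_i(u)=\int_0^1\partial_{u_i}H(tu)\,dt$ combined with $\partial_{u_i}H(\tau u)=\epsilon_i\,\partial_{u_i}H(u)$ (differentiating $\tau$-invariance of $H$) yields $h_i(\tau u)=\epsilon_i h_i(u)$; iterating, $h_{ij}(\tau u)=\epsilon_i\epsilon_j h_{ij}(u)$. In particular $h_{11}$ is $\tau$-invariant, and $A(\tau u)=\epsilon_1 A(u)$. Hence $\sqrt{h_{11}/c}$ is $\tau$-invariant while $u_1+A/(2h_{11})$ transforms by $\epsilon_1$, giving $\tau w_1=\epsilon_1 w_1$, as required. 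The only modest obstacle is bookkeeping around the scalar $c$ versus the Hadamard-convention constant in $h_{11}(0)$ and checking that the square-root factor remains smooth; the equivariance itself is essentially forced by naturality of the construction.
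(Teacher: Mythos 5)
Your proof is correct and follows essentially the same route as the paper's: two applications of Hadamard's lemma with the integral formula $\int_0^1\partial_{u_i}H(tu)\,dt$ to get $\epsilon_i\epsilon_j$-equivariant coefficients $h_{ij}$, then completing the square in $u_1$ with $w_1=\sqrt{h_{11}/c}\,(u_1+A/(2h_{11}))$ and observing that this expression transforms by $\epsilon_1$. Your bookkeeping is in fact slightly more careful than the paper's (the factor $\tfrac12$ in $A/(2h_{11})$ matches your ordered-sum convention $\sum_{i\le j}$, and you verify explicitly that the remainder lies in the ideal generated by $u_ju_k$, $2\le j\le k$), so there is nothing to add.
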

\begin{proof}
  We will say that a function $L\colon M\to\R$ 
  is $\epsilon$-equviariant if $L\circ\tau=\epsilon L$. With this terminology, we assume that $H(p)=0$,
  $p$ has coordinates $(u_1,\dots,u_n)$, and we write
  $H(z)=\int_0^1\frac{dH}{dt}(tz)dt$, leading to:
  \[H(u_1,\dots,u_n)=\sum_{j=1}^n u_j H_j(u_1,\dots,u_n).\]
  Here the functions $H_j$ are $\epsilon_j$-equivariant by construction. 
  Moreover $H_j(0,0,\dots,0)=\frac{\partial H}{\partial u_j}(p)=0$. That is, we may apply Hadamard Lemma again to obtain
  \[H(u_1,\dots,u_n)=\sum_{j\le k} u_ju_k H_{jk}(u_1,\dots,u_n),\]
  where now $H_{jk}$ are $\epsilon_j\epsilon_k$-equivariant.

  We use the fact that $H_{jk}(p)=\frac{\partial^2H}{\partial u_j\partial u_k}$, vanishing if $j\neq k$. Set now
  \[w=\sqrt{\frac1c H_{11}}\left(u_1+\sum_{j>1}u_j\frac{H_{1j}}{H_{11}}\right).\]
  Then, $\frac{\partial w}{\partial u_1}(p)=1$. Moreover $w$ is $\epsilon_1$-equivariant. Hence, map $(u_1,\dots,u_n)\mapsto (w,u_2,\dots,u_n)$
  is an equivariant change of variables as in the  claim.
\end{proof}
From the above result, we obtain the following corollary.
\begin{lemma}\label{lem:type_gen2}
  Suppose $H$ is a smooth function in variables $u_1,\dots,u_n$. Suppose  $DH(p)=0$ and $D^2H(p)$ is diagonal
  with diagonal terms $c_1,\dots,c_n$, and all but the last term are non-zero. Then, there is a $\tau$-equivariant
  base change $(u_1,\dots,u_n)\mapsto (w_1,\dots,w_n)$ so that
  \[H(w_1,\dots,w_n)=c_1w_1^2+\dots+c_{n-1}w^2_{n-1}+w_n^3 H'(w_1,\dots,w_n).\]
\end{lemma}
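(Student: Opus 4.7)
The plan is to iterate Lemma~\ref{lem:type_gen} to split off the terms $c_i w_i^2$ for $i = 1, \dots, n - 1$, and then absorb what remains into the shape $w_n^3 H'$ by an equivariant completion of squares.

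First, I apply Lemma~\ref{lem:type_gen} with $u_1$ as the distinguished variable, obtaining $H = c_1 w_1^2 + R_1$ with $R_1 = \sum_{2 \le j \le k \le n} u_j u_k H'_{jk}$. Because the original $D^2 H(p)$ is diagonal, the formula for $w_1$ in Lemma~\ref{lem:type_gen} has $\partial w_1/\partial u_j(p) = 0$ for $j > 1$, so in the new coordinates $D^2 H(p)$ remains diagonal with entries $(2c_1, c_2, \dots, c_n)$, hence $D^2 R_1(p) = \mathrm{diag}(0, c_2, \dots, c_n)$. This lets me apply Lemma~\ref{lem:type_gen} again to $R_1$ with $u_2$ as the distinguished variable, and iterate $n - 1$ times. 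After these iterations only $u_n$ remains in the Hadamard sum, so the remainder has the form $u_n^2 S(w_1, \dots, w_{n-1}, u_n)$, with $S(0) = 0$ because the $(n,n)$ entry of the Hessian is $c_n = 0$. The equivariance is maintained throughout by the $\tau$-equivariant version of Lemma~\ref{lem:type_gen}.

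Next, I apply an equivariant Hadamard expansion to $S$ at $0$, writing
\[S = \sum_{i=1}^{n-1} w_i A_i(w, u_n) + u_n B(w, u_n),\]
where, as in the proof of Lemma~\ref{lem:type_gen}, the integral formula gives $A_i$ with the $\tau$-parity of $w_i$ and $B$ with the $\tau$-parity of $u_n$. Substitution yields
\[H = \sum_{i=1}^{n-1}\bigl(c_i w_i^2 + w_i u_n^2 A_i\bigr) + u_n^3 B.\]
I then complete the square in each direction $i < n$ by the change of coordinates $\tilde w_i := w_i + u_n^2 A_i/(2 c_i)$, keeping $u_n$ fixed. This is a local diffeomorphism (its Jacobian at $p$ is the identity) and is $\tau$-equivariant since $u_n^2$ is invariant and $A_i$ is $\epsilon_i$-equivariant. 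A direct computation gives $c_i w_i^2 + w_i u_n^2 A_i = c_i \tilde w_i^2 - u_n^4 A_i^2/(4 c_i)$, so after renaming $\tilde w_i \mapsto w_i$ and setting $w_n := u_n$ we obtain the normal form with $H' = B - \sum_{i < n} w_n A_i^2/(4 c_i)$.

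The only delicate point is the iteration in the first step: one must know that the Hessian of the current remainder at $p$ stays diagonal so that Lemma~\ref{lem:type_gen} can be reapplied. This follows because, under the diagonality hypothesis, the differential at $p$ of the coordinate change of Lemma~\ref{lem:type_gen} is just a rescaling of the distinguished variable, as the cross terms $u_j H_{1j}/H_{11}$ in the defining formula vanish at $p$.
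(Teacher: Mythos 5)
Your proof is correct and follows essentially the same route as the paper's: iterate Lemma~\ref{lem:type_gen} to reach $\sum_{i<n} c_i w_i^2 + u_n^2 S$ with $S(0)=0$, Hadamard-expand $S$, and complete the square equivariantly via $w_i \mapsto w_i + u_n^2 A_i/(2c_i)$ so that all remaining terms are divisible by $u_n^3$. Your explicit justification that the Hessian stays diagonal through the iteration is a detail the paper leaves implicit, but it is the same argument.
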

\begin{proof}
  Apply Lemma~\ref{lem:type_gen} so as to obtain coordinates $u_1',\dots,u_n'$ and a function $H'$ such that
  \[H(u_1',\dots,u_n')=c_1{u_1'}^2+\dots+c_{n-1}{u_{n-1}'}^2+{u_n'}^2H'(u_1',\dots,u_n').\]
  We have that $H_1(0,\dots,0)=0$, so by Hadamard lemma, we obtain
  \[H_1(u_1',\dots,u_n')=u_1'H'_1(u_1',\dots,u_n')+u_2'H_2(u_2',\dots,u_n')+\dots+u_n'H'_n(u_1',\dots,u_n').\]
  We now apply the coordinate change
  \[w_i=u'_i+\frac{1}{2c_i} {u'_n}^2H'_i.\]
  This is a local diffeomorphism. Moreover, $\tau$ acts on ${u'_n}^2H'_i$ in the same way as on $w_i$, so the change is $\tau$-equivariant.
  Its effect is to trade the term ${u_n'}^2u'_iH'_i$ in $H$ for a term starting with ${u'_n}^4$.
  With this change of variables, we eventually obtain:
  \[H(w_1,\dots,w_n)=c_1w_1^2+\dots+c_{n-1}w_{n-1}^2+w_n^3H''(w_1,\dots,w_n).\]
\end{proof}

With the above lemmas, we describe the normal form of a function in $\cF^1_1$.
\begin{proposition}\label{prop:normalF11}
  Suppose $G$ is an equivariant function on $M$,
  the point  $p_0\notin M^{\Z_2}$  is a critical point of $G$, such that $DG(p_0)=0$ and $\dim\ker D^2G(p_0)=1$.
 Assume also, that  there exists a vector $v\in\ker D^2G(p_0)$ satisfying  $D^3G(p_0)(v,v,v)\neq 0$. 
	Then, there exists a local coordinate system $y_1,\dots,y_n$ near $p_0$
	such that
	\[G(y_1,\dots,y_n)=\sum_{i=1}^{n-1}\epsilon_i y_i^2+y_n^3+G(p_0),\]
	where $\epsilon_1,\dots,\epsilon_{n-1}\in\{-1,1\}$.
\end{proposition}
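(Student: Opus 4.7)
The plan is to reduce the claim to a purely non-equivariant normal form problem, exploiting the fact that $p_0\notin M^{\Z_2}$. Since $\tau p_0\neq p_0$, I can choose a neighborhood $U$ of $p_0$ small enough that $U\cap\tau U=\emptyset$. Any smooth change of coordinates performed on $U$ can then be transported to $\tau U$ by $\tau$, producing an equivariant change of variables on the $\tau$-invariant neighborhood $U\sqcup\tau U$ of the orbit. Consequently, no equivariance constraint is imposed on the construction carried out near $p_0$, and Lemmas~\ref{lem:type_gen} and \ref{lem:type_gen2} can be applied with the trivial action.

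First, after translating so that $p_0=0$ and $G(p_0)=0$, I would choose linear coordinates $(u_1,\dots,u_n)$ centered at $p_0$ which diagonalize $D^2G(p_0)$, arranging the unique zero eigenvalue to correspond to the last coordinate $u_n$. In these coordinates $v$ is a non-zero scalar multiple of $\partial/\partial u_n$ at $p_0$, and the diagonal entries $c_1,\dots,c_{n-1}$ of $D^2G(p_0)$ are all non-zero.

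Second, I would invoke Lemma~\ref{lem:type_gen2}, which produces coordinates $(w_1,\dots,w_n)$ with
\[G(w)=\sum_{i=1}^{n-1} c_i w_i^2 + w_n^3 H'(w)\]
for some smooth $H'$. The hypothesis $D^3G(p_0)(v,v,v)\neq 0$ computes in these coordinates as $\tfrac{\partial^3 G}{\partial w_n^3}(0)=6H'(0)$, up to the non-zero scaling that turns $v$ into $\partial/\partial w_n$; hence $H'(0)\neq 0$. Choose the real cube root $\alpha(w):=H'(w)^{1/3}$, which is smooth on a neighborhood of $0$, and set $y_n:=w_n\alpha(w)$. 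Since $\partial y_n/\partial w_n(0)=\alpha(0)\neq 0$, the assignment $(w_1,\dots,w_{n-1},w_n)\mapsto(w_1,\dots,w_{n-1},y_n)$ is a local diffeomorphism; it converts $w_n^3H'(w)$ into $y_n^3$ while leaving the quadratic piece unchanged.

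Finally, the rescaling $y_i:=\sqrt{|c_i|}\,w_i$ for $i=1,\dots,n-1$ reduces the coefficients of the quadratic terms to $\epsilon_i:=\operatorname{sgn}(c_i)\in\{-1,1\}$, yielding the desired normal form. Extending the composite coordinate change to $\tau U$ by $\tau$-equivariance completes the proof. The only subtle point is the existence of the smooth cube root, which is immediate from $H'(0)\neq 0$; everything else is a routine application of the lemmas already established.
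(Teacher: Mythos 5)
Your proposal is correct and follows essentially the same route as the paper's proof: diagonalize $D^2G(p_0)$ with the kernel direction last, apply Lemma~\ref{lem:type_gen2} with the trivial $\tau$-action (legitimate since $p_0\notin M^{\Z_2}$), absorb the non-vanishing factor $H'$ by the smooth cube-root substitution, and rescale the quadratic coordinates to normalize the coefficients to $\pm1$. Your extra remark on transporting the chart to $\tau U$ by equivariance just makes explicit what the paper leaves implicit.
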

\begin{proof}
	The proof is classical and follows the lines of the proof of classical Morse
	Lemma as in \cite[Lemma 2.1]{Milnor_morse}. First, choose coordinates $y_1,\dots,y_n$
	such that $D^2G(p)$ is diagonal, $\frac{\partial^2G}{\partial y_n^2}(p)=0$. It
	follows from the assumptions that $\frac{\partial^3 G}{\partial y_n^3}(p)\neq 0$. We are in
	the situation of Lemma~\ref{lem:type_gen2}; as $p\notin M^{\Z_2}$, we consider trivial $\tau$-action near $p$. By
	Lemma~\ref{lem:type_gen2}, there exists a change of variables such that
	\[G(y_1,\dots,y_n)=c_1y_1^2+\dots+c_{n-1}y_{n-1}^2+y_n^3G'(y_1,\dots,y_n)+G(p_0).\]
	Here, $c_i$ are the diagonal terms of the matrix  $D^2G(p)$. We can
	make them equal to $\pm 1$, by replacing $y_i$ with $\sqrt{|c_i|}y_i$.

	Note that $G'(0,\dots,0)\neq 0$, because it is equal, up to a constant factor,
	to the third derivative $\frac{\partial^3G}{\partial y_n^3}(p)$. Hence, a change of variables $y_n\mapsto y_n\sqrt[3]{G'(y_1,\dots,y_n)}$ is a local diffeomorphism. After this change, $G$ has the desired form.
\end{proof}
Next, we describe the local form for $\cF^1_{2,1}$. The argument is very similar
to the proof of Proposition~\ref{prop:normalF11}.
\begin{proposition}\label{prop:normalF121}
	Suppose $G\in\cF^1_{2,1}$, that is, there exists point $p_0\in M^{\Z_2}$
	such that $DG(p_0)=0$, $\dim\ker D^2G(p_0)=1$, the vector $v$ spanning
	the kernel of $D^2G(p_0)$ is tangent to $M^{\Z_2}$, and $D^3G(v,v,v)\neq 0$ at
	$p_0$.

	Then, there exist local coordinates $(x,y_1,y_2,\dots,y_{n-1})$ near $p_0$,
	with $\tau x=-x$ and  $\tau y_i = y_i$ such that
	\[G(x,y_1,\dots,y_n)=\epsilon_x x^2+\sum_{i=1}^{n-1}\epsilon_i y_i^2+\epsilon_ny_n^3+G(p_0),\]
  where $\epsilon_x,\dots,\epsilon_n\in\{-1,1\}$.
\end{proposition}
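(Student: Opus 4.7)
I would mimic the proof of Proposition~\ref{prop:normalF11} while carefully maintaining $\tau$-equivariance at every coordinate change; the key structural observation is that the degenerate kernel vector $v$ is tangent to $M^{\Z_2}$, so the cubic direction will lie among the $y$-coordinates (on which $\tau$ acts trivially), not in the $x$-direction. Concretely, I would first invoke Lemma~\ref{lem:block} to see that $D^2G(p_0)$ is block diagonal with respect to the splitting $T_{p_0}M = T_{p_0}M^{\Z_2} \oplus \langle \partial/\partial x\rangle$. Since $v \in \ker D^2G(p_0)$ lies in the first summand, the $1\times 1$ block in the $x$-direction is non-degenerate with some entry $c_x \neq 0$, while the $(n-1)\times(n-1)$ block on the $y$-coordinates has a one-dimensional kernel. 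An equivariant linear change of coordinates (trivial in $x$, orthogonal among the $y$'s) then diagonalizes the $y$-block and places the kernel direction last; any linear change among the $y$'s is automatically $\tau$-equivariant since $\tau y_i = y_i$. After this, $D^2G(p_0)$ is diagonal with entries $c_x, c_1, \ldots, c_{n-2}, 0$, and $D^3G(p_0)(v,v,v) \neq 0$ becomes $\partial^3 G/\partial y_{n-1}^3(p_0) \neq 0$.

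Next I would apply Lemma~\ref{lem:type_gen2} with variables $(u_1,\ldots,u_n) = (x, y_1, \ldots, y_{n-1})$ ordered so that the degenerate direction $y_{n-1}$ is last, and with $\tau$-weights $(-1,+1,\ldots,+1)$. The lemma produces an equivariant change of coordinates yielding
\[G = c_x x^2 + \sum_{i=1}^{n-2} c_i y_i^2 + y_{n-1}^3 H'(x, y_1, \ldots, y_{n-1}) + G(p_0),\]
with $H'(0) \neq 0$ since it is proportional to $\partial^3 G/\partial y_{n-1}^3(p_0)$. The substitution $y_{n-1} \mapsto y_{n-1} \cdot \sqrt[3]{\epsilon_n H'}$, with $\epsilon_n = \sgn H'(0)$, is a $\tau$-equivariant local diffeomorphism (because both $y_{n-1}$ and $H'$ are $\tau$-invariant); rescaling the remaining variables $x \mapsto x/\sqrt{|c_x|}$ and $y_i \mapsto y_i/\sqrt{|c_i|}$ then absorbs the quadratic coefficients into signs $\epsilon_x, \epsilon_1, \ldots, \epsilon_{n-2}$, giving the stated normal form.

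The main obstacle, and really the whole point of the argument, is ensuring that the cube-root extraction in the final step is compatible with the $\Z_2$-action. This works here precisely because $v \in T_{p_0}M^{\Z_2}$, so the cubic variable $y_{n-1}$ is $\tau$-invariant and $H'$ inherits $\tau$-invariance from $G$. In the sister case $\cF^1_{2,2}$, where $v$ is transverse to $M^{\Z_2}$ (that is, $v$ points in the $x$-direction with $\tau v = -v$), the analogous cube-root substitution on $x$ would destroy equivariance — indeed, equivariance forces $\partial^3 G/\partial x^3(p_0) = 0$ identically. This is exactly why Definition~\ref{def:deg_quad} formulates the obstruction in terms of a quasi-homogeneous quadratic form rather than a third-derivative condition, and why Proposition~\ref{prop:normalF121} is the ``easy'' boundary case.
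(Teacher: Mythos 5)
Your argument is correct and follows essentially the same route as the paper's proof: apply Lemma~\ref{lem:type_gen2} equivariantly with the degenerate direction placed last among the $\tau$-invariant $y$-coordinates, then extract a $\tau$-invariant cube root of the residual factor and rescale. Your additional preliminary appeal to Lemma~\ref{lem:block} to justify the equivariant diagonalization, and your closing remarks contrasting with the $\cF^1_{2,2}$ case, are accurate elaborations of what the paper leaves implicit rather than a different approach.
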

\begin{proof}
	The proof is analogous to the proof of Proposition~\ref{prop:normalF11},
	except that we keep track of the $\tau$-action.
	Apply Lemma~\ref{lem:type_gen2} to $(x,y_1,\dots,y_{n-1})$  to obtain 
	\[G(x,y_1,\dots,y_n)=c_x x^2+c_1y_1^2+\dots+c_{n-2}y_{n-2}^2+y_{n-1}^3H(x,y_1,\dots,y_{n-1}).\]
	We can change $c_i$ to $\sgn c_i$ by rescaling the $y$-variable linearly.
	Now $H(0,\dots,0)=\frac{\partial^3G}{\partial y_{n-1}^3}(p)\neq 0$,
	so the function $h(x,y_1,\dots,y_{n-1}):=H(x,y_1,\dots,y_{n-1})^{1/3}$
	is smooth near $p$. Moreover, it is $\tau$-invariant. The change of coordinates
	$y_{n-1}\mapsto y_{n-1}h(x,y_1,\dots,y_{n-1})$ transforms $G$ to the desired
  form.
\end{proof}
Finally, we investigate the case of $\cF^1_{2,2}$.
\begin{proposition}\label{prop:normalF122}
	Suppose $G\in\cF^1_{2,2}$. That
	is, there exists a point $p_0 \in M^{\Z_2}$
	such that $DG(p_0)=0$, $\dim\ker D^2G(p_0)=1$, the vector $v$ spanning
	the kernel of $D^2G(p_0)$ satisfies $\tau v=-v$, and
	the quasi-homogeneous quadratic form $BG(p_0)$ (see Definition~\ref{def:deg_quad}) is non-degenerate.

	Then, there exist local coordinates $x,y_1,\dots,y_{n-1}$ near
	$p_0$ with $\tau y_i=y_i$, $\tau x=-x$ and
	\[G(x,y_1,\dots,y_n)=\sum_{i=1}^{n}\epsilon_i y_i^2+\epsilon_x x^4,\]
  where $\epsilon_x,\dots,\epsilon_n\in\{-1,1\}$.
\end{proposition}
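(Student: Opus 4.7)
The plan follows the approach of Propositions~\ref{prop:normalF11} and~\ref{prop:normalF121}, with an additional wrinkle: the degenerate direction is the normal direction $x$, and $\tau$-equivariance kills all odd powers of $x$, so the leading term in $x$ is $x^4$ rather than $x^3$. Choose equivariant coordinates $(x,y_1,\dots,y_{n-1})$ near $p_0$ with $\tau x=-x$, $\tau y_i=y_i$, and $\partial_x$ spanning $\ker D^2G(p_0)$. Lemma~\ref{lem:block} makes $D^2G(p_0)$ block diagonal with zero $(x,x)$-entry and a non-degenerate $(n-1)\times(n-1)$ block on the $y$-directions; after a $\tau$-invariant linear change among the $y_i$, this block is $\operatorname{diag}(c_1,\dots,c_{n-1})$ with all $c_i\neq 0$.

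The crucial step is an equivariant splitting $G-G(p_0)=\sum\epsilon_i y_i^2+\phi(x)$ in which $\phi$ depends on $x$ alone. Apply the implicit function theorem to the system $\partial G/\partial y_i=0$ (solvable in $y$ near $p_0$ because the $y$-Hessian is invertible) to obtain a smooth $y^*(x)$ with $y^*(0)=0$; uniqueness together with $\tau$-equivariance forces $y^*(-x)=y^*(x)$, so $y^*(x)=O(x^2)$. After the equivariant substitution $\wt{y}_i=y_i-y_i^*(x)$, applying Hadamard's lemma twice in the $\wt{y}$-variables yields
\[G(x,\wt{y})=\phi(x)+\sum_{i,j}H_{ij}(x,\wt{y})\wt{y}_i\wt{y}_j,\]
with $\phi(x)=G(x,y^*(x))$ and $H_{ij}(0,0)=\tfrac{1}{2}c_i\delta_{ij}$ non-degenerate. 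An equivariant Morse lemma with parameter---carried out by iterating Lemma~\ref{lem:type_gen} in the $\wt{y}$-variables and treating $x$ as a $\tau$-equivariant parameter---reduces the quadratic form to $\sum\epsilon_i\wt{y}_i^2$ without disturbing $\phi$.

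It remains to normalise $\phi$. Differentiating $\phi(x)=G(x,y^*(x))$ and using $DG(p_0)=0$, $\partial^2G/\partial x^2(p_0)=0$, together with the vanishing of the cross derivatives $\partial^2G/\partial x\partial y_i(p_0)$ from Lemma~\ref{lem:block}, one checks $\phi(0)=\phi'(0)=\phi''(0)=0$; evenness adds $\phi'''(0)=0$. Hence $\phi(x)=x^4\Psi(x)$ for some smooth even $\Psi$ with $\Psi(0)=\phi^{(4)}(0)/24$. In the splitting coordinates the quasi-homogeneous quadratic form $BG(p_0)$ has no $x^2y_i$ cross terms and has $(x^2)^2$-coefficient $\phi^{(4)}(0)$; since its $y$-block is non-degenerate and $BG(p_0)$ remains non-degenerate by Lemma~\ref{lem:degen}, we conclude $\phi^{(4)}(0)\neq 0$, so $\Psi(0)\neq 0$. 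The $\tau$-equivariant substitution $\wt{x}=x|\Psi(x)|^{1/4}$ is then smooth and a local diffeomorphism near $0$, and satisfies $\wt{x}^4=\epsilon_x\phi(x)$ with $\epsilon_x=\sgn\Psi(0)$. Rescaling the $y_i$ to normalise the $c_i$ to $\pm 1$ finishes the reduction.

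The main obstacle is the splitting step in the second paragraph: naïve iteration of Lemma~\ref{lem:type_gen} in the full $(x,y)$-system leaves a remainder of the form $x^2\wt{H}(x,y)$ whose residual $y$-dependence cannot be eliminated by finitely many further algebraic substitutions, so the implicit function theorem (equivalently, a parametric Morse lemma) is essential to peel off a clean $x$-only summand. Once this is done, the remaining reductions are $\tau$-equivariant by inspection, since every substitution involves either even functions of $x$ (the components of $y^*$ and the factor $\Psi$) or odd ones (the new normal coordinate $\wt{x}$).
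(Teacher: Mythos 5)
Your proof is correct, but it takes a genuinely different route from the paper's. The paper stays entirely within the iterated completion-of-squares framework: it applies Lemma~\ref{lem:type_gen2} with $x$ as the degenerate last variable to reach $\sum c_iy_i^2+x^3H(x,y)$, observes that $\tau$-invariance forces $H$ to be odd in $x$, so the remainder is in fact $x^4H'(x,y)$ with $H'$ even, reads off $H'(0,\dots,0)\neq 0$ from the non-degeneracy of $BG$ exactly as you do, and then kills the residual $y$-dependence of $H'$ in a single stroke via the rescaling $x\mapsto x\sqrt[4]{\epsilon_xH'(x,y)}$, which is odd in $x$ and hence $\tau$-equivariant. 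You instead run a Gromoll--Meyer-type splitting: solve $\partial G/\partial y_i=0$ for $y^*(x)$ by the implicit function theorem, shift coordinates, peel off $\phi(x)=G(x,y^*(x))$, normalise the $y$-part by a parametric Morse lemma, and only then normalise $\phi$. Both arguments are sound and of comparable length; yours produces an honest $x$-only summand before any rescaling, which makes the computation of $BG$ in the new coordinates transparent, while the paper's avoids the implicit function theorem and the parametric Morse lemma altogether. The one assertion you should retract is the closing claim that the implicit function theorem is \emph{essential} because the residual $y$-dependence left by iterating Lemma~\ref{lem:type_gen} ``cannot be eliminated by finitely many further algebraic substitutions'': it can, and the paper does exactly that, since after the parity argument the remainder is $x^4H'(x,y)$ with $H'(0,\dots,0)\neq 0$ and the single equivariant substitution $x\mapsto x\sqrt[4]{\epsilon_xH'(x,y)}$ absorbs all of the $y$-dependence.
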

\begin{proof}
	The proof begins with the same pattern as in Proposition~\ref{prop:normalF122}.
	Apply the linear transformation in the $y$-variables so that the matrix of second derivatives is diagonal. Then,
  apply Lemma~\ref{lem:type_gen2} so that
  \[
  G(x,y_1,\dots,y_n)=c_1y_1^2+\dots+c_ny_n^2+x^3H(x,y_1,\dots,y_n).
  \]
  Notice that the change does not affect the congruency class of the quasi-homogeneous quadratic form $BG$ by Lemma~\ref{lem:degen}.
  Then, $H$ is an antisymmetric function, hence $H(0,y_1,\dots,y_n)=0$.
  That is $H$ is divisible by $x$, so that we can write
  \[
  G(x,y_1,\dots,y_n)=c_1y_1^2+\dots+c_ny_n^2+x^4H'(x,y_1,\dots,y_n).
  \]
  Then, $H'(0,\dots,0)\neq 0$, otherwise the quasi-homogeneous quadratic form $BG$ is degenerate.
  Hence, with $\epsilon_x$ being the sign of $H'(0,\dots,0)$,
  we can apply a coordinate change
  \[x\mapsto x\sqrt[4]{\epsilon_x H'(x,y_1,\dots,y_n)}\]
  This is a $\tau$-equivariant coordinate change.
\end{proof}
\subsection{Unfoldings of $\cF^1_1$, $\cF^1_{2,1}$ and $\cF^1_{2,2}$}\label{sub:beefurcate}
Having defined a normal form of each of the three singularities, we pass to describing
local unfoldings. Later, in Subsection~\ref{sub:versal}, we will show that this unfoldings give a typical behavior
of a 1-parameter family of functions
near each of the three spaces.
\begin{definition}\label{def:stF11}
	A \emph{standard unfolding} of a $\cF^1_1$ singularity
	is the family
	\[G_\lambda(y_1,\dots,y_n)=\sum_{i=1}^{n-1}\epsilon_i y_i^2+y_n^3+\lambda y_n,\]
	where $\lambda\in[-1,1]$; see Figure~\ref{fig:stF11}.

	We call this unfolding \emph{birth/death of a pair of interior critical points}.
\end{definition}
We note that $G_\lambda$ has no critical points if $\lambda>0$, $G_\lambda$ hits
$\cF^1_1$ at $\lambda=0$ and there are two critical points of $G_\lambda$ for $\lambda<0$. These critical points have indices differing by $1$. We say that the standard unfolding of a $\cF^1_1$ critical point is a birth-death of an interior critical point.
\begin{figure}
  \begin{tikzpicture}
    \draw (-5,0) node {\includegraphics[width=4cm]{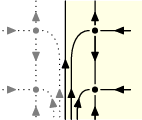}};
    \draw (5,0) node {\includegraphics[width=3cm]{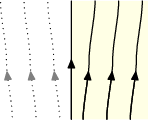}};
    \draw (0,0) node {\includegraphics[width=4cm]{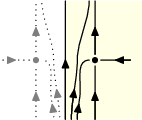}};
  \end{tikzpicture}
  \caption{Standard unfolding of a $\cF^1_1$ singularity. Left: $\lambda<0$. Middle: $\lambda=0$. Right: $\lambda>0$. We have
  drawn exemplary gradient flow of $G_\lambda$. The dotted part on each picture is the symmetric copy, indicating the $\tau$-action.}\label{fig:stF11}
\end{figure}

We pass to $\cF^1_{2,1}$.
\begin{definition}\label{def:stF121}
	A \emph{standard unfolding} of a $\cF^1_{2,1}$ singularity
	is the family
	\[G_\lambda(x,y_1,\dots,y_{n-1})=\epsilon_xx^2+\sum_{i=1}^{n-2}\epsilon_i y_i^2+y_{n-1}^3+\lambda y_n,\]
	where $\lambda\in[-1,1]$; see Figure~\ref{fig:stF121}.

	We call this unfolding \emph{birth/death of a pair of boundary critical points}.
\end{definition}
For $\lambda>0$ there are no critical points. For $\lambda=0$, $G_\lambda\in\cF^1_{2,1}$. For $\lambda<0$, there is a pair of critical points on $M^{\Z_2}$: they
are either both boundary stable or boundary unstable, depending on $\epsilon_x$. Their indices differ by $1$. 
We say that the family $G_\lambda$ is a birth-death of a boundary critical point.
\begin{figure}
  \begin{tikzpicture}
    \draw (-5,0) node {\includegraphics[width=4cm]{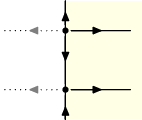}};
    \draw (5,0) node {\includegraphics[width=3cm]{pics-6.eps}};
    \draw (0,0) node {\includegraphics[width=4cm]{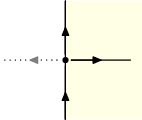}};
  \end{tikzpicture}
  \caption{Standard unfolding of a $\cF^1_{2,1}$ singularity. Left: $\lambda<0$. Middle: $\lambda=0$. Right: $\lambda>0$. We present the flow
  of $\nabla G_\lambda$.}\label{fig:stF121}
\end{figure}

Finally, we define deformation of the $\cF^1_{2,2}$ singularity.
\begin{definition}
	A \emph{standard unfolding} of a $\cF^1_{2,2}$ singularity is the family
	\[G_\lambda(x,y_1,\dots,y_{n-1})=\sum_{i=1}^{n-1}\epsilon_i y_i^2+\epsilon_x(x^4+\lambda x^2),\]
	where $\lambda\in[-1,1]$; see Figure~\ref{fig:stF122}.

	We call this unfolding \emph{collision of a boundary and an interior critical points}.
\end{definition}
\begin{figure}
  \begin{tikzpicture}
    \draw (3,0) node {\includegraphics[width=4cm]{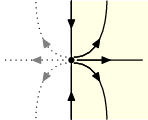}};
    \draw (-3,0) node {\includegraphics[width=4cm]{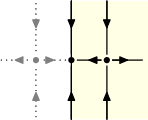}};
  \end{tikzpicture}
  \caption{Standard unfolding of a $\cF^1_{2,2}$ singularity. Left: $\lambda<0$. Right: $\lambda>0$. We present the flow
  of $\nabla G_\lambda$. Case $\lambda=0$ differs from $\lambda>0$ only by the speed of convergence to the critical point in
the $x$-direction.}\label{fig:stF122}
\end{figure}
This deformation is not as standard as the previous deformations, so we explain its criticial points in greater detail.
We assume that $\epsilon_x=1$, changing $\epsilon_x$ to $-1$ results in making
boundary stable critical points boundary unstable and vice versa.

Clearly, $G_\lambda$ is $\tau$-invariant. The critical points of $G_\lambda$ all
have $y_1=\dots=y_{n-1}=0$. There is always a critical point $(0,\dots,0)$, which we call $p_0$. 
If $\lambda>0$, $p_0$ is boundary unstable. If $\lambda<0$, $p_0$ is boundary stable, because the stability depends on the sign of $\epsilon_x\lambda$. For $\lambda\ge 0$ there are no other
critical points. For $\lambda<0$, there are two critical points away from $M^{\Z_2}$
with $x=\pm\sqrt{-\lambda}$. Call them $p_-$ and $p_+$. While regarding $M$ as the double of $N$, precisely one of the two points
is seen in $N$.

The second derivative of $G_\lambda$ restricted
to the space spanned by $\frac{\partial}{\partial y_1},\dots,\frac{\partial}{\partial y_{n-1}}$ is independent of $\lambda$, and the same
for all critical points $p_-,p_+,p_0$.
The behavior of $D^2G_\lambda$ in the $\frac{\partial}{\partial x}$ direction is not fixed and depends on both $\lambda$ and the critical point.

Assuming $\epsilon_x>0$, the index of $p_-$, $p_+$ points is equal to the index of $p_0$ for $\lambda>0$
and is one less than the index of $p_0$ for $\lambda<0$ (if $\epsilon_x<0$, then the index of $p_0$ for $\lambda<0$ is one greater
than the index of $p_{\pm}$).

\subsection{Versal deformations}\label{sub:versal}
In Subsection~\ref{sub:beefurcate}, we described local deformations
of singularities $\cF^1_1,\cF^1_{2,1},\cF^1_{2,2}$. We now prove the following statement.
\begin{proposition}\label{prop:versal}
	Suppose $\Phi_\sigma$ is a family of equivariant Morse functions on $M$.
	Assume that $\Phi_\sigma$ hits $\cF^1_1,\cF^1_{2,1},\cF^1_{2,2}$
	at a parameter $\sigma_0$. Then, there exists a local coordinate system
	near $p_0$, possibly depending on $\sigma$, the non-Morse critical point of $f_{\sigma_0}$, and a function
	$\lambda(\sigma)$ with $\lambda(\sigma_0)=0$ such that
	in these local coordinates $\Phi_\sigma=\Psi_\sigma\circ G_{\lambda(\sigma)}$,
	where $\Psi_\sigma$ is a diffeomorphism of $\R$ for all $\sigma$ near $\sigma_0$. Moreover,
	$\Phi_\sigma$ is transverse (as a path) to $\cF^1_1,\cF^1_{2,1},\cF^1_{2,2}$,
	if $\frac{d\lambda(\sigma)}{d\sigma}(\sigma_0)\neq 0$.
\end{proposition}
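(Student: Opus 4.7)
The plan is to obtain the conclusion in three steps: first, promote Propositions~\ref{prop:normalF11}--\ref{prop:normalF122} to parametric statements with smooth dependence on $\sigma$; second, apply a parametric splitting argument to exhibit $G_{\lambda(\sigma)}$ as a versal one-parameter unfolding in each case; third, read off transversality from $\lambda'(\sigma_0)$.

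For the first step, every coordinate change used in the proofs of Propositions~\ref{prop:normalF11}--\ref{prop:normalF122} is built from three ingredients: Hadamard's lemma, linear diagonalization of the non-degenerate block of $D^2\Phi_\sigma$, and extraction of a cube or fourth root of a smooth function that is nonzero at $\sigma_0$. All three preserve smooth dependence on $\sigma$ so long as the defining non-degeneracy conditions persist in a neighborhood of $\sigma_0$, which they do by openness; the critical point $p_0(\sigma)$ itself is traced by the implicit function theorem applied to the non-degenerate block of $D^2\Phi_{\sigma_0}(p_0)$. Consequently, after shrinking to a neighborhood of $(p_0,\sigma_0)$ and post-composing with an affine map $\Psi_\sigma \colon \R \to \R$ that absorbs the critical value, I may write
\[
\Psi_\sigma^{-1} \circ \Phi_\sigma = G_0 + R(\sigma, \cdot),
\]
where $R$ is $\tau$-invariant, smooth in $\sigma$, and vanishes identically at $\sigma = \sigma_0$.

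For the second step, I apply a parametric version of Lemma~\ref{lem:type_gen2} in the Morse directions. In the cases $\cF^1_1$ and $\cF^1_{2,1}$ this absorbs all $\sigma$-dependence except a remainder $r(\sigma,y_n)$, and a further $\sigma$-smooth adjustment $y_n \mapsto y_n + O(\sigma-\sigma_0)$ removes the $y_n^2$ contribution; a one-variable preparation step in $y_n$ modulo $y_n^2$ then reduces the remainder to $\lambda(\sigma) y_n$, as in Definition~\ref{def:stF11} and Definition~\ref{def:stF121}. For $\cF^1_{2,2}$ the splitting gives $\sum \epsilon_i y_i^2 + Q(\sigma, x)$, and $\tau$-equivariance forces $Q$ to depend on $x^2$ only. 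Writing $Q(\sigma, x) = \epsilon_x x^4 U(\sigma, x^2) + \epsilon_x \lambda(\sigma) x^2$ with $U(\sigma_0, 0) = 1$ (which holds by non-degeneracy of $B\Phi_{\sigma_0}(p_0)$) and applying the equivariant scaling $x \mapsto x \cdot U(\sigma, x^2)^{1/4}$ delivers the standard unfolding.

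The main obstacle will be the $\cF^1_{2,2}$ case: every intermediate coordinate change must be kept $\tau$-equivariant, which entails performing the splitting of $Q$ in the even-in-$x$ subring and carefully propagating the quasi-homogeneous weighting used in Definition~\ref{def:deg_quad} and Lemma~\ref{lem:degen}. Once the standard form is in hand, $\lambda(\sigma)$ is canonically determined by the equivariant jet of $\Phi_\sigma$ at $p_0(\sigma)$: it is the coefficient of $y_n$ (up to a nonzero factor) in the first two cases, and the invariant coefficient of $x^2$ in the last. Since each stratum $\cF^1_j$ is locally cut out in the equivariant jet bundle by $\lambda = 0$, the path $\sigma \mapsto \Phi_\sigma$ is transverse to it at $\sigma_0$ precisely when $\lambda'(\sigma_0) \neq 0$.
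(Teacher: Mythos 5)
Your proposal is correct in outline but takes a genuinely different route from the paper. The paper does not build the parametric normal form by hand: it invokes the Versality Theorem \cite[Theorem 8.3]{AVG}, reducing the claim to infinitesimal versality, which is then deduced from transversality of the jet extension of $G_\lambda$ to the defining stratum $W$ together with the key observation that the set $\cW$ of germs lying over $W$ is a \emph{finite} union of RL-orbits (this is exactly what Propositions~\ref{prop:normalF11}--\ref{prop:normalF122} supply), so that transversality to the stratum coincides with transversality to the orbit $\cG_{G_0}$. You instead prove versality directly: a parametric splitting in the Morse directions followed by a one-variable reduction of the remainder to the standard unfolding. Both routes are legitimate; yours is more self-contained and elementary, while the paper's makes explicit the finite-orbit (no moduli) point, which is the structural reason the argument works at all, as the paper remarks after its proof.

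Two details in your write-up need care. First, the critical point $p_0$ does \emph{not} persist for $\sigma\neq\sigma_0$ --- in the birth/death cases it disappears or splits into two --- so the implicit function theorem applied to the nondegenerate block traces only the locus where the partial derivatives in the Morse directions vanish, i.e.\ the base point of the split coordinates, not a critical point of $\Phi_\sigma$. Second, in the $\cF^1_{2,2}$ case the substitution $x\mapsto x\,U(\sigma,x^2)^{1/4}$ normalizes the quartic term but feeds back into the quadratic one, turning $\epsilon_x\lambda(\sigma)x^2$ into $\epsilon_x\lambda(\sigma)x^2$ times a unit in $x^2$; removing the resulting higher-order corrections exactly (and, likewise, reducing the remainder to exactly $\lambda(\sigma)y_n$ in the cubic cases) requires an iteration, the Malgrange preparation theorem, or the homotopy method. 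This is precisely the analytic content that the paper outsources to the Versality Theorem, so it should be stated as such rather than passed off as a formal manipulation; once that is done, your identification of $\lambda$ as a local defining function of the stratum correctly yields the transversality criterion $\frac{d\lambda}{d\sigma}(\sigma_0)\neq 0$.
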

\begin{proof}
  We use standard arguments, used e.g. in \cite{Wall_proj} and based on versality theory of \cite[Section 8]{AVG}.
	The statement is by definition equivalent to saying that $G_\lambda$
	constructed in Subsection~\ref{sub:beefurcate} is versal. Indeed, versality means precisely that every deformation
	can be induced from the given one.

	In order to show that the deformation is versal, it is enough
	to show that it is infinitezimally versal by Versality Theorem \cite[Theorem 8.3]{AVG}.\footnote{The proof
	in \cite{AVG} is given in a non-equivariant setting, but it is easily adapted
	to the case with a $\Z_2$-action.}

      Versality is usually discussed locally, i.e. via germs of maps, rather than for global spaces.
      Consider the space $\cO$ of germs at $p_0$ of smooth equivariant functions from $M$ to $\R$ taking
      value $0$ at $0$. On this space  two  groups of  diffeomorphisms act.
      First one is the group of reparametrizations (germs of self-diffeomorphisms of $M$ preserving $p_0$),
      the other is the group of self-diffeomorphism of $\R$ preserving $0$.
      For any $F_0\in\cO$,  let $\cG_{F_0}$ be the orbit	
      of $F_0$ under the action of the product of the two groups (in \cite{AVG} this is called the orbit of RL-equivalence).

      A deformation $F_\lambda$, $\lambda\in\Lambda$, where $\Lambda$ is an open subset of some $\R^k$ ($\Lambda$ is the parameter space)
      defines a (germ of a map) $\cT\colon \Lambda\to\cO$. We say that $F_\lambda$ is infinitezimally versal,	if $\cT$ is transverse to $\cG_{F_0}$. 

      Infinitezimal versality of each of the three deformations of Subsection~\ref{sub:beefurcate} can be readily verified using
      \cite[Section 8.2, Theorem]{AVG}. We present a more conceptual approach.

      Suppose $G_\lambda$, $\lambda\in(-\varepsilon,\varepsilon)$ is as in Definition~\ref{def:stF11}. 
      It deforms the $\cF^1_1$ singularity $G_0$. Assume $W\subset \mI^{3,1}_G(M;\R)$ is the defining set of that singularity.
      By direct inspection of Definition~\ref{def:stF11}, the map $(-\varepsilon,\varepsilon)\times M \to \mI^{3,1}_G(M;\R)$
      given by $(\lambda,z)\mapsto \mj^{3,1}_G G_\lambda(z)$ is transverse to $W$ at $\lambda=0$.
      This, however, is not yet the same as transversality of $\cT$ to the orbit $\cG_{G_0}$ required by the condition for infinitezimal versality.

        Consider the space $\cW\subset\cO$
	of all germs $H\in\cO$ such that $\mj^{3,1}_G H\in W$. Transversality of $\mj^{3,1}_G G_\lambda$
        to $W$ is equivalent to transversality of $\cT$ to $\cW$.

	The key result from Proposition~\ref{prop:normalF11}
        (and Proposition~\ref{prop:normalF121}, Proposition~\ref{prop:normalF122})
	is that each element of $\cW$ has its normal form specified by the number of occurences of $+1$
        in the set $\{\epsilon_1,\dots,\epsilon_{n-1}\}$. That is, $\cW$ decomposes as
        a disjoint sum $\cW_1,\dots,\cW_{n-1}$, and each of the $\cW_i$ is an orbit of a single function. That is,
	transversality to $\cW$ is equivalent to transversality to the orbit $\cG_{G_0}$.
        In other words, $\cT$ is transverse to $\cG_{G_0}$.
	
	An analogous argument work for $\cF^1_{2,1}$ and~$\cF^1_{2,2}$.
\end{proof}
We remark that if the normal form depends on continuous parameters (i.e. there are so-called \emph{moduli}), 
the situation might be more subtle. If $\cW$ is a union of infinitely many orbits, transversality to $\cW$ does not imply
transversality to the orbit $\cG_{G_0}$.

\section{Theorem~\ref{thm:main} and examples}\label{sec:pmain}
\subsection{Proof of Theorem~\ref{thm:main}}\label{sub:proof}
For the reader's convenience, we recall the statement of Theorem~\ref{thm:main}.
\begin{theorem*}
	Suppose $F_0,F_1$ are two Morse functions for a manifold $N$ with boundary.
	Assume $F_\sigma$, $\sigma\in[0,1]$ is a path connecting these functions.
        Then, up to perturbing $F_\sigma$ rel $F_0,F_1$, we can assume that
	\begin{itemize}
        \item there exists a finitely many points $\sigma_1,\dots,\sigma_m$ such that $F_\sigma$ is Morse
          if $\sigma$ is different from $\sigma_1,\dots,\sigma_m$;
	\item at each of $\sigma_i$, there is precisely one non-Morse critical point.
          The change between $F_{\sigma_i-\varepsilon}$ and $F_{\sigma_i+\varepsilon}$
          (with $\varepsilon>0$ sufficiently small) is one of the following:
			\begin{itemize}
				\item birth/death of critical points in the interior;
				\item birth/death of critical points on the boundary;
				\item a collision of a boundary and interior critical point.
			\end{itemize}
	\end{itemize}
      \end{theorem*}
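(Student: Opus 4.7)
The plan is to pass from the given path $F_\sigma$ on $N$ to a path of $\Z_2$-equivariant functions on the double $M := \cD(N)$, apply the equivariant transversality machinery of Section~\ref{sec:cerf}, and then restrict the resulting local models back to $N$.

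The first step lifts the family to an equivariant one on $M$. I would use Proposition~\ref{prop:dbldbl} to replace each endpoint $F_0, F_1$ by an arbitrarily $C^0$-close doublable boundary Morse function agreeing with the original on a neighborhood of its critical points. Proposition~\ref{prop:dbldbl} also supplies boundary-Morse interpolations realizing these endpoint substitutions that are constant near every critical point; after reparametrizing $\sigma$, these interpolations can be absorbed into arbitrarily short subintervals of $[0,1]$ near $0$ and $1$, and so amount to a $C^0$-small perturbation of $F_\sigma$ rel the original endpoints that does not affect the bifurcation content. I would then approximate the middle of the path uniformly by doublable functions and extend equivariantly to obtain a family $\Phi_\sigma\colon M\to\R$, whose endpoints are equivariant Morse by Lemma~\ref{lem:doubl}.

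The second step applies Proposition~\ref{prop:generic}(b) to perturb $\Phi_\sigma$ rel its endpoints so that $\Phi_\sigma\in\cF^0$ for all but finitely many parameters $\sigma_1,\dots,\sigma_m$, at each $\sigma_i$ the function $\Phi_{\sigma_i}$ lies in $\cF^1_1\cup\cF^1_{2,1}\cup\cF^1_{2,2}$, and the path avoids every higher-codimension stratum. At each $\sigma_i$, Proposition~\ref{prop:versal} supplies $\Z_2$-equivariant local coordinates around the unique degenerate critical point in which $\Phi_\sigma$ is conjugate, up to a target diffeomorphism, to the standard unfolding $G_{\lambda(\sigma)}$ from Subsection~\ref{sub:beefurcate}, with $\lambda$ depending transversely on $\sigma$. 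By construction the three strata $\cF^1_1, \cF^1_{2,1}, \cF^1_{2,2}$ correspond respectively to interior birth/death, boundary birth/death, and collision; since the local models are $\Z_2$-equivariant, restricting them to a fundamental domain $N\subset M$ realizes the three bifurcations on $N$ itself.

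The main obstacle is precisely the point flagged by Remark~\ref{rem:subtle}: because Proposition~\ref{prop:bummer} shows that boundary Morse functions are neither open nor dense in $\cF(N)$, direct jet transversality on $N$ is unavailable. The remedy is to perform all genericity arguments on $M$ using equivariant Thom transversality (Theorem~\ref{thm:equiv}) and to use doublability (Proposition~\ref{prop:dbldbl}) to legitimately pass back and forth. A secondary subtlety is reconciling the endpoint substitution with the ``rel $F_0, F_1$'' clause; this is handled by the reparametrization trick above, where the doublifying homotopies are compressed into arbitrarily short subintervals near $\sigma=0,1$ and hence contribute no new bifurcations in the final description.
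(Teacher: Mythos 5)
Your proposal is correct and follows essentially the same route as the paper: replace the endpoints by doublable functions via Proposition~\ref{prop:dbldbl}, pass to an equivariant path on the double, apply Proposition~\ref{prop:generic}(b) and Proposition~\ref{prop:versal} to identify the finitely many bifurcations with the three standard unfoldings, and restrict back to $N$. Your extra care in compressing the doublifying homotopies near $\sigma=0,1$ and in keeping the middle of the path close to the original is a refinement of the ``rel $F_0,F_1$'' bookkeeping that the paper handles more loosely (it simply connects $\Phi_0$ to $\Phi_1$ by an arbitrary equivariant path), but it does not change the substance of the argument.
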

\begin{proof}
  First, by Proposition~\ref{prop:dbldbl} we find doublable functions $P_0$ and $P_1$ such that $P_0$
	has the same critical points as $F_0$ and $P_1$ has the same critical points
	as $F_1$. Moreover, there is a path of Morse functions connecting $F_0$ with $P_0$ and $F_1$ with $P_1$ (see \ref{item:Ffam} of
	Proposition~\ref{prop:dbldbl}).
	Therefore, it is enough to prove the statement for $P_0$ and $P_1$.

	Let $\Phi_0$ and $\Phi_1$ be the equivariant functions on $M$ obtained by doubling $N$. Then, $\Phi_0$ and $\Phi_1$ can be connected by a path of equivariant functions. Call this path $\Phi_\sigma$. As $\Phi_0$, $\Phi_1$ are Morse, Proposition~\ref{prop:generic} applies.
	That is, the path $\Phi_\sigma$ can be perturbed to a path crossing transversely $\cF^1_1$, $\cF^1_{2,1}$ and $\cF^1_{2,2}$,
	and missing $\cF^{\ge 2}_{i}$ for all $i=1,\dots,5$.
	We apply Proposition~\ref{prop:versal} to show that the change of $\Phi_\sigma$ at the intersection points
	is as in Subsection~\ref{sub:beefurcate}: either it is a birth/death of interior critical points, or it is a birth/death of critical points on $M^{\Z_2}$, or it is
	a collision of critical points.

	Define $P_\sigma=\Phi_\sigma|_N$.
	 Then, $P_\sigma$ connects $P_0$ with $P_1$ and the events for $P_\sigma$
	are exactly as described in the itemized list.
\end{proof}

\subsection{Moving critical points to the boundary}\label{sub:moving}
As an  application, we describe the splitting of an interior critical point described in \cite{BNR,BMi}

In \cite{BNR}, the process was explained as a deformation of a $D^4_-$ singularity.
To simplify the notation, we work with $N=\R_{\ge 0}\times\R$, $M=\R^2$.
The higher dimensional case involves adding quadratic terms to the function $\Phi$ and to its deformation,
which  does not substantially differ from the two-dimensional case.

On $M$ we consider coordinates $x,y$, with $\tau x=-x$ and $\tau y=y$. Let 
\[\Phi(x,y)=y^3-yx^2.\]
This function has $D^4_-$ singularity, see \cite[Section 3]{AVG}. The deformation
\[\Phi_\lambda(x,y)=\Phi(x,y)+\lambda y\]
results in pushing critical points to the boundary. Indeed,
\[D\Phi_\lambda=(-2xy,\lambda-x^2+3y^2)\]
If $\lambda>0$, then the critical points are at $(\pm\sqrt{\lambda},0)$. If 
$\lambda<0$, the critical points are at $(0,\sqrt{-\lambda/3})$. This means that
 $\Phi_\lambda|_N$ has a pair of critical points on $\partial N=\{x=0\}$ for $\lambda<0$.
On the other hand,  $\Phi_\lambda|_N$ has a single critical point for $\lambda>0$.

We make the following observation.
\begin{lemma}
	The function $\Phi$ belongs to $\cF^{\ge 2}_2$.
\end{lemma}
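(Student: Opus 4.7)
The plan is to verify directly that $\Phi(x,y) = y^3 - yx^2$ satisfies the defining conditions of $\cF^{\ge 2}_2$: namely, that it has a critical point $p$ such that $D\Phi(p) = 0$ and $\dim\ker D^2\Phi(p) > 1$. Since $\cF^{\ge 2}_2$ requires only the existence of such a point (along the orbit $p, \tau p$), we need only locate one and compute the Hessian there.

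First, I would compute
\[
D\Phi(x,y) = (-2xy,\ 3y^2 - x^2)
\]
and solve $D\Phi = 0$. From $xy=0$ together with $3y^2 = x^2$ one sees $x = y = 0$, so $(0,0)$ is a critical point (and in fact the unique one). Note that $(0,0) \in M^{\Z_2}$, so its $\tau$-orbit is a single point, which is compatible with the definition of $\cF^{\ge 2}_2$.

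Next, I would compute the Hessian:
\[
D^2\Phi(x,y) = \begin{pmatrix} -2y & -2x \\ -2x & 6y \end{pmatrix}.
\]
Evaluating at the origin gives $D^2\Phi(0,0) = 0$, so $\dim \ker D^2\Phi(0,0) = 2 > 1$. This places $\Phi$ in $\cF^{\ge 2}_2$ as required.

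There is no serious obstacle — the statement is a direct computational check. The only thing to note is that, consistently with Lemma~\ref{lem:block}, the off-diagonal entries of $D^2\Phi$ at the fixed point $(0,0)$ vanish (here trivially, since the entire Hessian vanishes), so the equivariant block decomposition is degenerate on both blocks. This is exactly why the singularity sits in $\cF^{\ge 2}_2$ rather than in one of the codimension-$1$ strata, and it also explains why the deformation $\Phi_\lambda$ is not the standard unfolding of a single codimension-$1$ event but rather decomposes into a sequence of such events, as described in the surrounding discussion.
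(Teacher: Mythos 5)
Your proof is correct and follows essentially the same route as the paper: identify $(0,0)$ as the critical point, observe that the Hessian vanishes there (all quadratic terms of $\Phi$ vanish), and conclude $\dim\ker D^2\Phi(0,0)=2>1$. Your version is slightly more explicit about uniqueness of the critical point and the block structure, but the argument is the same direct computation.
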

\begin{proof}
	The space $\cF^{\ge 2}_2$ has been defined by the condition that there exists
	a critical point $p$ for which $\dim\ker D^2\Phi(p)>1$. And this is the case
	for $\Phi$. At $(0,0)$ all the quadratic terms vanish, so $\dim\ker D^2\Phi(0,0)=2$.
\end{proof}
As $\Phi$ belongs to $\cF^{\ge 2}_2$, one cannot expect that the deformation $\Phi_\lambda$
to be versal (the expectation is that the base of a versal deformation has dimension at least $2$).
Consider
\[\Phi_{\lambda,\mu}(x,y)=y^3-x^2y+\lambda y+\mu x^2.\]
We study this deformation and show how we can pass from $\Phi_{-1,0}$ to $\Phi_{1,0}$
by hitting only $\cF^{1}_1,\cF^{1}_{2,1},\cF^{1}_{2,2}$ strata.

To this end, we study critical points of $\Phi_{\lambda,\mu}$. Note that
\begin{equation}\label{eq:Dphi} D\Phi_{\lambda,\mu}(x,y)=(2x(\mu-y),3y^2-x^2+\lambda).\end{equation}
It is also convenient to compute the second derivative.
\begin{equation}\label{eq:D2phi}
  D^2\Phi_{\lambda,\mu} = 
  \begin{pmatrix} 
    2(\mu-y) & -2x \\ -2x & 6y 
  \end{pmatrix}.
\end{equation}
To study critical points of $\Phi_{\lambda,\mu}$, following \eqref{eq:Dphi}, we consider two cases $x=0$ and $y=\mu$. It might be helpful to consult Figure~\ref{fig:crit}.
\begin{figure}
  \includegraphics[width=5cm]{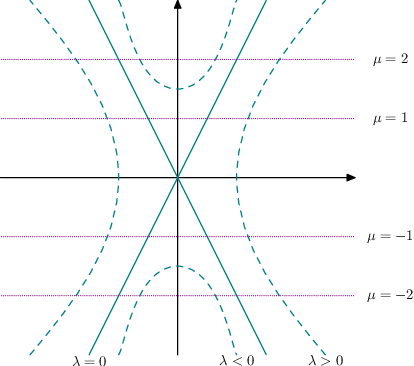}
  \caption{The behavior of $\Phi_{\lambda,\mu}$.
    The derivative $\frac{\partial\Phi_{\lambda,\mu}}{\partial\mu}$ vanishes on $x=0$ and on one of the horizontal lines, whose height is $\mu$.
    The derivative $\frac{\partial\Phi_{\lambda,\mu}}{\partial\lambda}$ vanishes on one of the hyperbolas (or a degenerate hyperbola)
    depending on the parameter $\lambda$. Critical points of $\Phi_{\lambda,\mu}$ are read off as the intersection points of the two vanishing sets. Transverse intersection
  corresponds to Morse-type critical points.}\label{fig:crit}
\end{figure}

\emph{Case 1.} Suppose $x=0$. Assume first $\lambda<0$.
There are two critical points $p_{\pm}=(0,\pm \sqrt{-\lambda/3})$.
As $x=0$, by \eqref{eq:D2phi}, the second derivative is diagonal.
The indices $\pind p_{\pm}$ of the two critical points are $0$ and $1$ depending
on whether $y>0$ or $y<0$. The second derivative with respect to $x$ is $2(\mu-y)$.
That is, for $\mu\neq \pm\sqrt{-\lambda/3}$, both $p_+$ and $p_-$ are non-degenerate, i.e. they are Morse critical points.

If $\lambda=-3\mu^2$, then the kernel either of $D^2\Phi_{\lambda,\mu}(p_+)$ (if $\mu>0$) or of $D^\Phi_{\lambda,\mu}(p_-)$ (if $\mu<0$) 
is one-dimensional, but tangent to $x=0$. By direct inspection $B\Phi$ is non-degenerate.
With these parameters $\Phi_{\lambda,\mu}$ belongs to $\cF^1_{2,2}$.

If $\lambda=0$ but $\mu\neq 0$, then $p_+=p_-=(0,0)$ and so $D^2\Phi$ has one-dimensional kernel, spanned
by $\frac{\partial}{\partial y}$. The derivative $\frac{\partial^3}{\partial y^3}\Phi_{\lambda,\mu}\equiv 6\neq 0$,
so $(0,0)$ corresponds to $\cF^1_{2,1}$ stratum.

For $\lambda>0$, there are no critical points with $x=0$.


\emph{Case 2.} Suppose $y=\mu$. Then, the critical points have
$x^2=\lambda+3\mu^2$. For $\lambda>-3\mu^2$, we have a pair of critical
points $q_\pm=(\pm\sqrt{\lambda+3\mu^2},\mu)$ that are swapped under the $\tau$-action. So on $N$, we have one interior critical point. If $\lambda=-3\mu^2$, $q_+=q_-$, $x=0$ and we are in the situation
of Case~1.

In Figure~\ref{fig:crit}, the critical points for $y=\mu$ correspond to intersections of a green, dashed line with a horizontal purple line.

From this discussion it follows that in the space of parameters we can specify
the following four regions (see Figure~\ref{fig:philambda}):
\begin{itemize}
	\item $\lambda>0$ corresponds to a single critical point in the interior.
	\item $\lambda<0$ but $\lambda>-3\mu^2$ has a pair of critical points
		on the boundary (of the same stability) and an interior critical point. This case accounts for two
		regions with $\mu>0$ and $\mu<0$.
	\item $\lambda<-3\mu^2$, we have two critical points on the boundary that
		have opposite stability.
\end{itemize}
When starting from parameter $\lambda=1,\mu=0$, and going counterclockwise, we hit the vertical line $\lambda=0$.
This creates a cancelling pair of critical points on the boundary, both of which are boundary unstable. Next, we hit the parabola $\lambda=-3\mu^2$.  At that point, a collision occurs. The interior critical point hits the upper boundary critical point changing its stability. We reach
$\lambda=-1,\mu=0$ parameter.

When going clockwise, we hit the line $\lambda=0$ first, but this time with parameter $\mu<0$. A cancelling pair of critical points is
created, but this time they are boundary stable. Then, on hitting the parabola, a collision occurs, changing the stability of the \emph{lower} boundary critical point.
\begin{figure}
  \begin{tikzpicture}
    \node at (0,0) {\includegraphics[width=8cm]{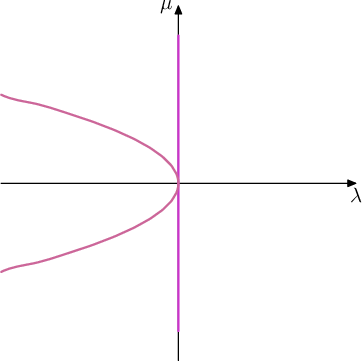}};
    \node[opacity=0.7] at (-4,0) {\includegraphics[width=2cm]{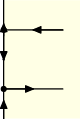}};
    \node[opacity=0.7] at (4,1.5) {\includegraphics[width=2cm]{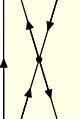}};
    \node[opacity=0.7] at (-1.5,3) {\includegraphics[width=2cm]{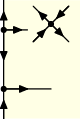}};
    \node[opacity=0.7]  at (-1.5,-3.5) {\includegraphics[width=2cm]{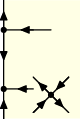}};
  \end{tikzpicture}
  \caption{Bifurcation of $\Phi_{\lambda,\mu}$. The discriminant set $\{\lambda=0\}\cup\{\lambda=-3\mu^2\}$ is drawn. For each of the four regions, we sketch the gradient vector field near the critical locus of $\Phi_{\lambda,\mu}$.}\label{fig:philambda}
\end{figure}

\bibliographystyle{alpha}
\def\MR#1{}
\bibliography{biblio}

\end{document}